\newtheorem{theorem}{Theorem}
\newtheorem{proposition}[theorem]{Proposition}
\theoremstyle{remark}
\theoremstyle{definition}
\newtheorem{definition}[theorem]{Definition}
\newtheorem{example}[theorem]{Example}
\newcommand{\N}{\mathbb{N}}
\newcommand{\R}{\mathbb{R}}
\newcommand{\D}{\mathcal{D}}
\newcommand{\A}{\mathcal{A}}
\newcommand{\f}{\mathfrak{f}}
\numberwithin{theorem}{section}
\numberwithin{equation}{section}
\begin{document}

\title{A Hitchhiker's Guide to Automatic Differentiation}
\date{}
\author{Philipp H. W. Hoffmann} 

\maketitle
\renewcommand{\thefootnote}{\fnsymbol{footnote}}
\begin{center} \emph{This a preprint of an article which has appeared in:\\ Numerical Algorithms, Vol. 72 No. 3 (2016), 775-811.\footnote{The final publication is available at 
www.springerlink.com\\
Journal URL: \url{http://link.springer.com/journal/11075}}} \end{center}

\renewcommand{\thefootnote}{\arabic{footnote}}
 \setcounter{footnote}{0}
\begin{abstract}
This article provides an overview of some of the mathematical principles of Automatic Differentiation (AD). In particular, we summarise different descriptions of the Forward Mode of AD, like the matrix-vector product based approach, the idea of lifting functions to the algebra of dual numbers, the method of Taylor series expansion on dual numbers and the application of the push-forward operator, and explain why they all reduce to the same actual chain of computations. We further give a short mathematical description of some methods of higher-order Forward AD and, at the end of this paper, briefly describe the Reverse Mode of Automatic Differentiation.
\end{abstract}

\textbf{Keywords:} Automatic Differentiation, Forward AD, Reverse AD, Dual Numbers

AMS Subject Classification (2010): 65-02, 65K99 

\section{Introduction}\label{intro}
Automatic Differentiation (short AD), also called Algorithmic or Computational Differentiation, is a method to evaluate derivatives of functions which differs significantly from the classical ways of computer-based differentiation through either approximative, numerical methods, or through symbolic differentiation, using computer algebra systems. While approximative methods (which are usually based on finite differences) are inherently prone to truncation and rounding errors and suffer from numerical instability, symbolic differentiation may (in certain cases) lead to significant long computation times. Automatic Differentiation suffers from none of these problems and is, in particular, well-suited for the differentiation of functions implemented as computer code. Furthermore, while Automatic Differentiation is also numerical differentiation, in the sense that it computes numerical values, it computes derivatives up to machine precision. That is, the only inaccuracies which occur are those which appear due to rounding errors in floating-point arithmetic or due to imprecise evaluations of elementary functions. For these reasons, AD has received significant interest from computer scientists and applied mathematicians, in the last decades.

The very first article on this procedure is probably due to Wengert \cite{Wengert1964ASA} and appeared already in 1964. Two further major publications regarding AD were published by Rall in the 1980s \cite{Rall1983Dag}, \cite{Rall1986} and, since then, there has been a growing community of researcher interested in this topic. 

So what is Automatic Differentiation? The answer to this question may be sought in one of the many publications on this topic, which usually provide a short introduction to the general theory. Furthermore, there are also excellent and comprehensive publications which describe the area as a whole (see, for example, Griewank \cite{Griewank2003AMV} and Griewank and Walther \cite{Griewank2008}). However, an unfamiliar reader may find it nevertheless difficult to grasp the essence of Automatic Differentiation. The problem lies in the diversity with which the (actual simple) ideas can be described. While in \cite{Griewank2003AMV} and \cite[Section 2]{PearlmutterSiskind2008a} the step-wise evaluation of a matrix-vector product is described as the basic procedure behind AD, in \cite{kalman-2002a} Automatic Differentiation is defined via a certain multiplication on pairs (namely the multiplication which defines the algebra of \emph{dual numbers}). Similarly, in \cite{Siskind2008NFM} the lifting of a function to said dual numbers is presented as the core principle of AD, where in \cite[Section 2]{pearlmutter-siskind-popl-2007}, the evaluation of the Taylor series expansion of a function on dual numbers appears to be the main idea. Finally, Manzyuk \cite{Manzyuk-mfps2012} bases his description on the push-forward operator known from differential geometry and, again, gives a connection to functions on dual numbers. While the latter descriptions at least appear to be similar (although not identical), certainly the matrix-vector product based approach seems to differ from the remaining methods quite a lot. Of course, all the publications mentioned contain plenty of cross-references and each different description of AD has its specific purpose. However, for somebody unfamiliar with the theory, it may still be difficult to see why the described techniques are essentially all equivalent. This article hopes to clarify the situation.

We will in the following give short overviews of the distinct descriptions of AD\footnote{To be more precise, of the Forward Mode of AD.} mentioned above and show, why they all are just different expressions of the same principle. It is clear that the purpose of this article is mainly educational and there is little intrinsically new in our elaborations. Indeed, in particular with regards to \cite{Griewank2003AMV}, we only give a extremely shorted and simplified version of the work in the original publication. Furthermore, there are actually at least two distinct versions, or modes, of AD. The so-called \emph{Forward Mode} and the \emph{Reverse Mode} (along with variants such as \emph{Checkpoint Reverse Mode} \cite{Griewank1992ALG}). The different descriptions mentioned above all refer to the Forward Mode only. We are, therefore, mainly concerned with Forward AD. We will discuss the standard Reverse Mode only in the preliminaries and in a section at the end of this paper.

In addition, we will mainly restrict ourselves to AD in its simplest form. Namely, Automatic Differentiation to compute (directional) first 
order derivatives, of a differentiable, multivariate function $f: X \to \R^m$, on an open set $X \subset \R^n$. We only briefly discuss the 
computation of higher-order partial derivatives in Section \ref{Higher-Order Partial}, referring mainly to the works of Berz \cite{Berz} and 
Karczmarczuk \cite{KarczmarczukI}. There is also a rich literature on the computation of 
whole Hessians (see, for instance, \cite{Dixon} or \cite{GowerMello}), however, we will not be concerned with this extension of AD in this 
article. The same holds for Nested Automatic Differentiation, which involves a kind of recursive calling of AD (see, for 
example, \cite{Siskind2008NFM}). Again, we will not be concerned with this topic in this paper. 

As mention above, Automatic Differentiation is often (and predominantly) used to differentiate computer programs, that is, implementations of mathematical functions as code. In the case of first order Forward AD, the mathematical principle used is usually the lifting of functions to dual numbers (see Figure \ref{Figure Haskell code Dual} for an implementation example with test case). More information on this topic can, for example, be found in \cite{Bischof}, \cite{Griewank2008} or (in particular considering higher-order differentiation) in \cite{KarczmarczukI}.

The notation we are using is basically standard. As mentioned above, the function we want to differentiate will be denoted by $f$ and will be defined on an open set $X \subset \R^n$ (denoted by $U$ in Section \ref{Higher-Order Partial} to avoid confusion).\footnote{In principle, one can also consider functions of complex variables. Since the rules of real and complex differential calculus are the same, this does not lead to any changes in the theory.} In particular, in this paper $n$ always denotes the number of variables of $f$, while $m$ denotes 
the dimension of its co-domain. In Sections \ref{section Griewank} and \ref{section Reverse}, the notation $x_i$ is 
reserved for variables of the function $f$, while other variables are denoted by $v_i$. The symbol $c$ always denotes a fixed value (a constant). For real vectors, we use boldface letters like $\mathbf{x}$ or $\mathbf{c}$ (where the latter will be a constant vector). Furthermore, 
$\mathbf{\overset{\rightharpoonup}{x}}$ and $\mathbf{\overset{\leftharpoonup}{y}}$ will be (usually fixed) directional vectors or $1$-row matrices, respectively. Entries of $\mathbf{\overset{\rightharpoonup}{x}}$ or $\mathbf{\overset{\leftharpoonup}{y}}$ will be denoted by $x'_i$ or $y'_i$, respectively\footnote{The notation $x'_i$ for entries of 
$\mathbf{\overset{\rightharpoonup}{x}}$ is somewhat historical and based on the idea that, very often, $x'_i$ may be considered as a derivative of either the identity function, or a constant function. For us, however, each $x'_i \in \R$ is simply a chosen real number. The same holds for the notation $y'_i$.}. Finally, we denote all multiplications (of numbers, as well as matrix-vector multiplication) mostly by a simple dot. The symbol $*$ will be used sometimes when we want to emphasize that multiplication of numbers is a differentiable function on $\R^2$. 

\section{Preliminaries}

\subsection{The basic ideas of Automatic Differentiation}\label{subsection basic idea}

Before we start with the theory, let us demonstrate the ideas of AD in a very easy case: Let $f, \varphi_1, \varphi_2, \varphi_3: \R \to \R$ be differentiable functions with $f = \varphi_3 \circ \varphi_2 \circ \varphi_1$. Let further $c, x', y' \in \R$ be real numbers. Assume we want to compute $f'(c) \cdot x'$ or $y' \cdot f'(c)$, respectively. (Of course, the distinction between multiplication from the left and from the right is motivated by the more general case of multivariate functions.)

By the chain rule, 
\begin{align*}
&f'(c) \cdot x' = \varphi'_3\left(\varphi_2\left(\varphi_1(c)\right)\right) \cdot \varphi'_2\left(\varphi_1(c)\right) \cdot \varphi'_1(c) \cdot x' 
\end{align*}
As one easily sees, the evaluation of $f'(c) \cdot x'$ can be achieved by computing successively the following pairs of real numbers:
\begin{figure}
\centering
\includegraphics[]{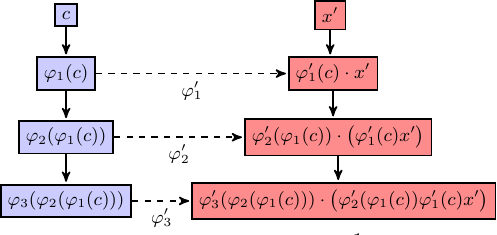}
\caption{Computational graph for the computation of $f'(c) \cdot x'$.}\label{first graph}
\end{figure}

\[\begin{array}{c}(c,\ x') \\ (\varphi_1(c),\ \varphi'_1(c) \cdot x') \\ (\varphi_2\left(\varphi_1(c)\right), \ \varphi'_2\left(\varphi_1(c)\right) \cdot \varphi'_1(c) x') \\ 
(\varphi_3\left(\varphi_2\left(\varphi_1(c)\right)\right), \ \varphi'_3\left(\varphi_2\left(\varphi_1(c)\right)\right) \cdot \varphi'_2\left(\varphi_1(c)\right) \varphi'_1(c)  x')\end{array}\] and taking the second entry of the final pair. As we see, the first element of each pair appears as an argument of the functions $\varphi_i, \varphi'_i$ in the following pair, while the second element appears as a factor (from the right) to the second element in the following pair. 

Regarding the computation of $y' \cdot f'(c)$, we have obviously
\begin{align*}
&y' \cdot f'(c) = y' \cdot \varphi'_3\left(\varphi_2\left(\varphi_1(c)\right)\right) \cdot \varphi'_2\left(\varphi_1(c)\right) \cdot \varphi'_1(c).
\end{align*}
The computation of this derivative can now be achieved by the computing the following two lists of real numbers:
\[\begin{array}{c} c \\ \varphi_1(c) \\ \varphi_2(\varphi_1(c)) \\ \varphi_3(\varphi_2(\varphi_1(c))) \end{array} \ \ \ \ \begin{array}{c} y' \\ y' \cdot \varphi'_3\left(\varphi_2\left(\varphi_1(c)\right)\right) \\ y' \varphi'_3\left(\varphi_2\left(\varphi_1(c)\right)\right) \cdot \varphi'_2\left(\varphi_1(c)\right) \\ y'  \varphi'_3\left(\varphi_2\left(\varphi_1(c)\right)\right) \varphi'_2\left(\varphi_1(c)\right) \cdot \varphi'_1(c)\end{array} \]
and taking the last entry of the second list. Here, each entry (apart from $y'$) in the second list consists of values of 
$\varphi'_i$ evaluated at an element of the first list (note that the order is reversed) and the previous entry as a factor (from the left).
\begin{figure}
\centering
\includegraphics[]{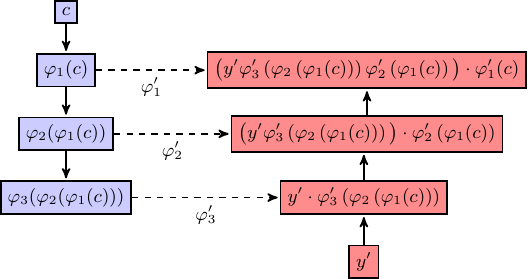}
\caption{Computational graph for the computation of $y'\cdot f'(c)$.}\label{second graph}
\end{figure}

In both examples, the computation of $\varphi_3(\varphi_2(\varphi_1(c)))$ is actually unnecessary to obtain the sought derivative. This value is, however, computed in all models we will consider in this article.

If now the functions $\varphi_1, \varphi_2, \varphi_3$ and their derivatives $\varphi'_1, \varphi'_2, \varphi'_3$ are implemented in the system, then the evaluation of values $\varphi_i(v_i),\varphi'_i(v_i)$ for some $v_i$ means simply calling these functions/derivatives with suitable inputs. The computation of $f'(c) \cdot x'$ or $y' \cdot f'(c)$ then becomes nothing else than obtaining values $\varphi_i(v_i),\varphi'_i(v_i)$, performing a multiplication and passing the results on. That is, neither is some derivative evaluated symbolically, nor is some differential or difference quotient computed. In that sense, the derivative of $f$ is computed `automatically'.

\subsection{The setting in general}

As mentioned above, (First Order) Automatic Differentiation, in its simplest form, is concerned with the computation of derivatives of a differentiable function
$f: X \to \R^m$, on an open set $X \subset \R^n$. The assumption made is that each $f_j: X \to \R$ in 
\[f(x_1,...,x_n)=\left( \begin{array}{c} f_1(x_1,...,x_n) \\ \vdots \\ f_m(x_1,...,x_n) \end{array} \right), \ \ 
\textrm{for all} \ (x_1,...,x_n) \in X,\]
consists (to be defined more precisely later) of several sufficiently smooth so-called \emph{elementary} (or \emph{elemental}) \emph{functions} $\varphi_i: U_i \to \R $, defined on open sets $U_i \subset \R^{n_i}$, with $i \in I$ for some index set $I$. The set of elementary functions $\{\varphi_i \ | \ i \in I\}$ has to be given and can, in principle, consist of arbitrary functions as long as these are sufficiently often differentiable. However, certain functions are essential for computational means, including addition and multiplication\footnote{Here, we consider indeed addition and multiplication as differentiable functions
\\ $+: \R^2 \to \R$ and $*: \R^2 \to \R$.}, constant, trigonometric, exponential functions etc. Figure \ref{table essential functions} shows a table of such a (minimal) list. A more comprehensive list can be found, for example, in \cite[Table 2.3]{Griewank2008}.
\begin{figure}
\centering
\includegraphics[]{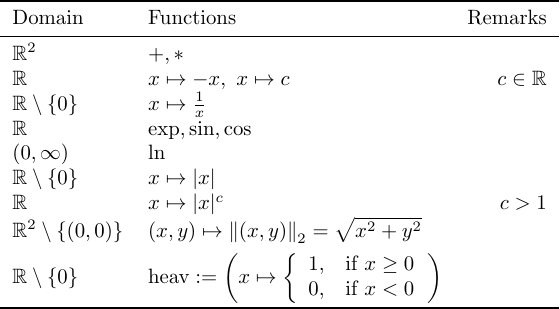}
\caption{Table of essential elementary functions according to Griewank and Walther \cite{Griewank2008}. The domains are chosen such that the functions are differentiable.}\label{table essential functions}
\end{figure}
All elementary functions will be implemented in the system together with their gradients. 

Automatic Differentiation now does not compute the actual mapping\\ 
$\mathbf{x} \mapsto J_f(\mathbf{x})$, which maps a vector $\mathbf{x}\in X$ to the Jacobian $J_f(\mathbf{x})$ of $f$ at $\mathbf{x}$. Instead, directional derivatives of $f$ or left-hand products of row-vectors with its Jacobian 
at a fixed vector $\mathbf{c}\in X$ are determined. That is, given $\mathbf{c}\in X$ and $ \mathbf{\overset{\rightharpoonup}{x}} \in \R^n $ or $\mathbf{\overset{\leftharpoonup}{y}} \in \R^{1 \times m}$, we determine either 
\[J_f(\mathbf{c}) \cdot \mathbf{\overset{\rightharpoonup}{x}} \ \ \ \ \ \textrm{or} \ \ \ \ \ \mathbf{\overset{\leftharpoonup}{y}}  \cdot J_f(\mathbf{c}).\] 
(This is not a subtle difference, since, while $\mathbf{x} \mapsto J_f(\mathbf{x})$ is a matrix-valued function, $J_f(\mathbf{c}) \cdot \mathbf{\overset{\rightharpoonup}{x}}$ and $ \mathbf{\overset{\leftharpoonup}{y}}  \cdot J_f(\mathbf{c}) $ are vectors or one-row matrices, respectively, in euclidean space.)

The computation of directional derivatives of $J_f(\mathbf{c})$ is referred to as the Forward Mode of AD, or Forward AD, while the computation 
of $\mathbf{\overset{\leftharpoonup}{y}} \cdot J_f(\mathbf{c})$ is referred to as the Reverse Mode of AD, or Reverse AD. We may give the following, informal descriptions:

Let $f: X \to \R^m$ consist of (not necessarily distinct!) elementary functions $\varphi_1,...,\varphi_\mu$. Then
\begin{itemize}
\item
\emph{Forward Automatic Differentiation is the computation of $J_f(\mathbf{c}) \cdot \mathbf{\overset{\rightharpoonup}{x}}$ for fixed $\mathbf{c}\in X$ and $ \mathbf{\overset{\rightharpoonup}{x}} \in \R^n $ through the successive computation of pairs of real numbers 
\[\left(\varphi_1(\mathbf{c_{1}} ),\ \nabla  \varphi_1(\mathbf{c_{1}}  ) \cdot   \mathbf{\overset{\rightharpoonup}{x_{1}}}\right),...,
\left(\varphi_\mu(\mathbf{c_{\mu}} ),\ \nabla  \varphi_\mu(\mathbf{c_{\mu}}  ) \cdot   \mathbf{\overset{\rightharpoonup}{x_{\mu}}}\right)
\in \R^2\]
for suitable vectors $\mathbf{c_{i}} \in U_i, \mathbf{\overset{\rightharpoonup}{x_{i}}} \in \R^{n_i}$, $i=1,...,\mu$.}
\item
\emph{Reverse Automatic Differentiation is the computation of $\mathbf{\overset{\leftharpoonup}{y}} \cdot J_f(\mathbf{c})$ for fixed  $\mathbf{c}\in X$ and $\mathbf{\overset{\leftharpoonup}{y}} \in \R^{1 \times m}$ through the computation of the two lists of real numbers 
\begin{align*}
&\varphi_1(\mathbf{c_{1}}),...,\varphi_\mu(\mathbf{c_{\mu}})\in \R \\[1ex] 
\textrm{and} \ \ \ &v_{1} \cdot 
\frac{\partial \varphi_1}{\partial  v_k}(\mathbf{c_{1}}) + v_{1,k}\ ,...,\ v_{\mu} \cdot 
\frac{\partial \varphi_\mu}{\partial  v_k}(\mathbf{c_{\mu}}) + v_{\mu,k} \ \in \R, \ \ k = 1,...,n_{i},
\end{align*}
for suitable vectors $\mathbf{c_{i}} \in U_i$ and suitable numbers $v_{i},v_{i,k} \in \R$, $i=1,...,\mu$.}
\end{itemize}
Of course, the vectors and numbers $\mathbf{c_{i}}, \mathbf{\overset{\rightharpoonup}{x_{i}}},v_{i}, v_{i,k}$ are determined in a certain way; as is the order of in which the computations are performed.

As mentioned before, the function $f$ has to be constructed using elementary functions. Loosely speaking, we may say that $f$ has to be a composition of elements of $\{\varphi_i \ | \ i \in I\}$. However, this is not quite correct from a strictly mathematically point of view. Since all elementary functions are real-valued, it is clear that a composition $\varphi_\mu \circ \cdots \circ \varphi_1$ can not be defined, as soon as one of the $\varphi_2,...,\varphi_\mu$ is multivariate.\footnote{For instance, it is impossible to write $x \mapsto \exp(x) + \sin(x)$ as a composition of $\exp, \sin$ and $+$.} Admittedly, this is a rather technical and not really important issue, but, for completeness, we give the following inductive definition:

\begin{definition}\label{definition automatically differentiable}
\begin{itemize}
\item[(i)]
We call a function $h: X \to \R$ on open $X \subset \R^{n}$ \emph{automatically differentiable}, if
\begin{itemize}
\item $h \in \{ \varphi_i \ | \ i \in I\}$ or
\item there exist functions $h_k: X_k \to \R$ on open sets $X_k \subset \R^{n_k}$, $k=1,...,\ell$, such that for all $\mathbf{x}=(x_1,...,x_n) \in X$, there exist $n_0 \geq 0$ many $x_{0,1},...,x_{0,n_0} \in \{x_1,...,x_{n}\}$ and, for $k=1,...,\ell-1$, $l=1,...,n_k$, there exist $n_k$ many $x_{k,l} \in \{x_1,...,x_{n}\} \cap X_k$,  with
\begin{align*}
&h(\mathbf{x}) \\
&= h_{\ell}(x_{0,1},...,x_{0,n_0},h_1(x_{1,1},...,x_{1,n_1}),...,h_{\ell-1}(x_{\ell-1,1},...,x_{\ell-1,n_{\ell-1}})),
\end{align*}
and $h_{\ell} \in \{ \varphi_i \ | \ i \in I\}$ and, for $k=1,...,\ell-1$, each $h_k$ is automatically differentiable.
\end{itemize}
\item[(ii)] We call a function $f: X \to \R^m$ with $f(\mathbf{x})=\left( \begin{array}{c} f_1(\mathbf{x}) \\ \vdots \\ f_m(\mathbf{x}) \end{array}\right)$ for all $\mathbf{x} \in X$ \emph{automatically differentiable}, if each $f_j: X \to \R$ is automatically differentiable.
\end{itemize}
\end{definition}

\begin{example}\label{example automatically differentiable}
The function
$h: \R^2 \to \R$ given by
\begin{align*}
h(x_1,x_2) &= \sin(x_2) + 5 * \cos(x_1 * x_1).
\end{align*}
is automatically differentiable.
\end{example}

From now on, without necessarily stating it explicitly, we will always assume that our function $f: X \to \R^m$ is automatically differentiable in the sense of Definition \ref{definition automatically differentiable}. 

One may, rightfully, ask why we use an inductive description of automatically differentiable functions, instead of just describing them as compositions of suitable multi-variable, \underline{multi-dimensional} mappings. However, from a computational point of view, one should note that an automatically differentiable $f: X \to \R^m$ will usually be given in the form of Definition \ref{definition automatically differentiable}, such that expressing $f$ as a composition may require additional work. 

Nevertheless, expressing $f$ as a composition is indeed the basic step in an elementary description of Automatic Differentiation, which we describe in the next Section. We will describe other (equivalent) approaches which work directly with functions of the form of Definition \ref{definition automatically differentiable} in later sections.

\section{Forward AD---An elementary approach}\label{section Griewank}

In this approach, the function $f$ is described as a composition of multi-variate and multi-dimensional mappings. Differentiating this composition to obtain $J_f(\mathbf{c}) \cdot \mathbf{\overset{\rightharpoonup}{x}}$, for given 
$\mathbf{c}\in X \subset \R^n$ and $\mathbf{\overset{\rightharpoonup}{x}} \in \R^n$, leads, by the chain rule, to a product of matrices. This method has, for example, been described in \cite[Section 2]{PearlmutterSiskind2008a} and, comprehensively, in the works of Griewank \cite{Griewank2003AMV} and Griewank and Walther \cite{Griewank2008}. We follow mainly the notation of \cite{Griewank2003AMV}.

The simple idea is to express $f$ as a composition of the form
\[ f=P_Y \circ \Phi_{\mu} \circ \cdots \circ \Phi_1 \circ P_X.\] 
Here, $P_X: X \to H$ is the (linear) natural embedding of the domain $X \subset \R^n$ into the so-called \emph{state 
space} $H := \R^{n+ \mu}$, where $\mu$ is the total number of (not necessarily distinct) elementary functions 
$\varphi_i$ of which $f$ consists. Each \\
$\Phi_i: H \to H$, referred to as an 
\emph{elementary transition}, corresponds to exactly one such elementary function. The mapping 
$P_Y : H \to \R^m$ is some suitable linear projection of $H$ down into $\R^m$.

Determining now $J_f(\mathbf{c}) \cdot \mathbf{\overset{\rightharpoonup}{x}}$ for fixed $\mathbf{c}\in X$ and fixed
$\mathbf{\overset{\rightharpoonup}{x}} \in \R^n$ becomes, by the chain rule, the evaluation of the matrix-vector product
\begin{align}\label{matrix-vector product Griewank}
J_f(\mathbf{c}) \cdot \mathbf{\overset{\rightharpoonup}{x}} &= 
P_Y \cdot \Phi'_{\mu,\mathbf{c}}\ \cdots \ \Phi'_{1,\mathbf{c}} \cdot P_X \cdot \mathbf{\overset{\rightharpoonup}{x}},\end{align}
where $\Phi'_{i,\mathbf{c}}$ denotes the Jacobian of $\Phi_i$ at $\left(\Phi_{i-1}\circ \cdots \circ \Phi_1 \circ P_X \right)(\mathbf{c})$. 



The process is now performed in a particular ordered fashion, which we describe in the following.

The evaluation of $f$ at some point $\mathbf{x}=(x_1,...,x_n)$ can be described by a so-called 
\emph{evaluation trace}
$\mathbf{v}^{[0]}=\mathbf{v}^{[0]}(\mathbf{x}),...,\mathbf{\mathbf{v}}^{[\mu]}=\mathbf{v}^{[\mu]}(\mathbf{x})$, where each $\mathbf{\mathbf{v}}^{[i]} \in H$ is a so-called \emph{state vector}, representing the state of the evaluation after $i$ steps. More precisely, we set
\[\mathbf{\mathbf{v}}^{[0]}:= P_X(x_1,...,x_n) = (x_1,...,x_n,0,...,0) \ \ \textrm{and} \ \ \mathbf{\mathbf{v}}^{[i]} = \Phi_i(\mathbf{\mathbf{v}}^{[i-1]}), \ \ i = 1,...,\mu.\]
The elementary transitions $\Phi_i$ are now given by imposing some suitable ordering on the $\mu$ elementary functions $\varphi_k$ of which $f$ consists, such that $\varphi_i$ is the $i$-th elementary function with respect to this order, and by setting 
\[ \Phi_i\left(\begin{array}{c} v_1 \\ \vdots \\ v_{n+\mu}\end{array}\right) 
= \left(\begin{array}{c} v_1 \\ \vdots \\ v_{n+i-1} \\ \varphi_i(v_{i_1},...,v_{i_{n_i}}) 
											\\ v_{n+i+1} 
											\\ \vdots 
											\\ v_{n+\mu} \end{array} \right),\ \ \textrm{for all} \  
											\left(\begin{array}{c} v_1 \\ \vdots \\ v_{n+\mu}\end{array}\right) \in H,\]
and $v_{i_1},...,v_{i_{n_i}} \in \{v_1,...,v_{n+i-1}\} \cap U_i$ (where $U_i \subset \R^{n_i}$ is the open domain of $\varphi_i$). Note that this is not a definition in the strict sense, since we neither specify the ordering of the $\varphi_i$, nor the arguments $v_{i_1},...,v_{i_{n_i}}$ of each $\varphi_i$. These will depend on the actual functions $f$ and $\varphi_i$. (Compare the example below.)

Therefore, we have 					
\begin{align*}\mathbf{v}^{[i]}(\mathbf{x}) = \Phi_i(\mathbf{\mathbf{v}}^{[i-1]}(\mathbf{x})) = \left(\begin{array}{c} \mathbf{\mathbf{v}}^{[i-1]}_1(\mathbf{x}) = x_1
											\\ \vdots 
											\\ \mathbf{\mathbf{v}}^{[i-1]}_n(\mathbf{x}) = x_n
											\\ \vdots
											\\ \mathbf{\mathbf{v}}^{[i-1]}_{n+i-1} (\mathbf{x})
											\\ \varphi_i(v_{i_1}(\mathbf{x}),...,v_{i_{n_i}}(\mathbf{x})) 
											\\ 0 
											\\ \vdots 
											\\ 0 \end{array} \right)
 = \left(\begin{array}{c} \mathbf{\mathbf{v}}^{[i-1]}_1 = x_1
											\\ \vdots 
											\\ \mathbf{\mathbf{v}}^{[i-1]}_n = x_n
											\\ \vdots
											\\ \mathbf{\mathbf{v}}^{[i-1]}_{n+i-1} 
											\\ \varphi_i(v_{i_1},...,v_{i_{n_i}}) 
											\\ 0 
											\\ \vdots 
											\\ 0 \end{array} \right), \end{align*} 
for $v_{i_1}= v_{i_1}(\mathbf{x}),...,v_{i_{n_i}} = v_{i_{n_i}}(\mathbf{x}) \in 
\{\mathbf{\mathbf{v}}^{[i-1]}_1,...,\mathbf{\mathbf{v}}^{[i-1]}_{n+i-1}\} \cap U_i$.

It is clear that, for the above to make sense, the ordering imposed on the elementary functions $\varphi_k$ must have the property 
that all arguments in $\varphi_i(v_{i_1},...,v_{i_{n_i}})$ have already been evaluated, before $\varphi_i$ is applied.

The definition of the projection $P_Y:H \to \R^m$ depends on the ordering imposed on the $\varphi_k$. If this ordering is such that we have 
\[f_1(x_1,...,x_n) = \mathbf{\mathbf{v}}^{[\mu]}_{n+\mu-m} \ ,...,\ f_m(x_1,...,x_n) = \mathbf{\mathbf{v}}^{[\mu]}_{n+\mu},\]
we can obviously choose $P_Y(v_1,...,v_{n+\mu}) = (v_{n+\mu-m},...,v_{n+\mu})$.

\begin{example}
The following is a trivial modification of an example taken from \cite[page 332]{Griewank2003AMV}.

Consider the function $f: \R^2 \to \R^2$ given by
\[ f(x_1,x_2) = \left( \begin{array}{c}\exp(x_1) * \sin(x_1 + x_2) \\ x_2 \end{array}\right).\]

Choose $H = \R^{7}$ and $f = P_Y \circ \Phi_5 \circ \Phi_4 \circ \Phi_3 \circ \Phi_2 \circ \Phi_1 \circ P_X $ with 
\[
P_X: \R^2 \to \R^7, \ \ \textrm{with} \ \ P_X(x_1,x_2) = (x_1,x_2, 0, 0, 0, 0, 0),\]
$\Phi_i: \R^7 \to \R^7$, $i=1,...,5$, with
\begin{align*}
&\Phi_1\left(v_1,v_2,v_3,v_4,v_5,v_6,v_7 \right) 
= (v_1,v_2,\exp(v_1),v_4,v_5,v_6,v_7), \\ 
&\Phi_2\left(v_1,v_2,v_3,v_4,v_5,v_6,v_7 \right)
= (v_1, v_2, v_3, v_1 + v_2,v_5,v_6,v_7 ),\\
&\Phi_3\left(v_1,v_2,v_3,v_4,v_5,v_6,v_7 \right)
= (v_1, v_2, v_3, v_4, \sin(v_4),v_6,v_7 ),\\
&\Phi_4\left(v_1,v_2,v_3,v_4,v_5,v_6,v_7 \right)= (v_1, v_2, v_3, v_4, v_5, v_3 * v_5,v_7),\\
&\Phi_5\left(v_1,v_2,v_3,v_4,v_5,v_6,v_7 \right)= \left(v_1,v_2,v_3,v_4,v_5,v_6,v_2 \right) 
\end{align*}
and 
\[P_Y: \R^7 \to \R^2, \ \ \textrm{with} \ \ P_Y(v_1,v_2,v_3,v_4,v_5,v_6,v_7) = (v_6,v_7).\]
\end{example}

Analogously to the evaluation of $f(\mathbf{x})$, the evaluation of the matrix-vector product 
\eqref{matrix-vector product Griewank} for some $\mathbf{c} \in \R^n$ and some
$\mathbf{\overset{\rightharpoonup}{x}} = (x_1',...,x_n') \in \R^n$ can be expressed as an evaluation trace $\mathbf{v'}^{[0]}=\mathbf{v'}^{[0]}(\mathbf{c},\mathbf{\overset{\rightharpoonup}{x}}),...,\mathbf{v'}^{[\mu]}=\mathbf{v'}^{[\mu]}(\mathbf{c},\mathbf{\overset{\rightharpoonup}{x}})$, where
\[\mathbf{v'}^{[0]} := P_X \cdot \mathbf{\overset{\rightharpoonup}{x}}= (x_1',...,x_m',0,...,0)  \ \ \ \textrm{and} \ \ \ \mathbf{\mathbf{v'}}^{[i]}: = \Phi'_{i,\mathbf{c}} \cdot \mathbf{v'}^{[i-1]}, \ \ i = 1,...,\mu.\]
By the nature of the elementary transformations $\Phi_i$, each Jacobian\\ $\Phi'_{i,\mathbf{c}} := J_{\Phi_i}(\mathbf{v}^{[i-1]}(\mathbf{c}))$ will be of the form
\begin{align}\label{Phi'}
&\Phi'_{i, \mathbf{c}} = \left(\begin{array}{cccccc} 
1 & \cdots & 0 & 0 & \cdots & 0 \\													
\vdots & \ddots & \vdots & \vdots & \cdots & \vdots \\
0 & \cdots & 1 & 0 & \cdots & 0 \\
\frac{\partial \varphi_i}{\partial  v_1}(\cdots ) & \cdots & \cdots & \cdots & \cdots & \frac{\partial \varphi_i}{\partial  v_{n+\mu}}
(\cdots ) \\
0 & \cdots & 0 & 1 & \cdots & 0 \\
\vdots & \vdots & \vdots & \vdots & \ddots & \vdots \\
0 & \cdots & 0 & 0 & \cdots & 1 
\end{array}\right) \leftarrow \ (n+i)\textrm{-th row},
\end{align}
where $\frac{\partial \varphi_i}{\partial  v_{k}}
(\cdots ) = \frac{\partial \varphi_i}{\partial  v_k}(v_{i_1}(\mathbf{c}),...,v_{i_{n_i}}(\mathbf{c}) )$ is interpreted as $0$ if $\varphi_i$ does not depend on $v_k$.

Thus, each $\mathbf{\mathbf{v'}}^{[i]}$ will be of the form
\[\mathbf{\mathbf{v'}}^{[i]} = \left(\begin{array}{c} \mathbf{\mathbf{v'}}^{[i-1]}_1 = x'_1
											\\ \vdots 
											\\ \mathbf{\mathbf{v'}}^{[i-1]}_n = x'_n
											\\ \vdots
											\\ \mathbf{\mathbf{v'}}^{[i-1]}_{n+i-1} 
											\\ \nabla \varphi_i(v_{i_1},...,v_{i_{n_i}} ) 
											\cdot \left( \begin{array}{c} v'_{i_1} \\ \vdots \\ v'_{i_{n_i}} \end{array} 
											\right)
											\\ 0 
											\\ \vdots 
											\\ 0 \end{array} \right),\]
for $v'_{i_1}=v'_{i_1}(\mathbf{c},\mathbf{\overset{\rightharpoonup}{x}}),...,v'_{i_{n_i}} 
= v'_{i_{n_i}}(\mathbf{c},\mathbf{\overset{\rightharpoonup}{x}}) \in \{\mathbf{\mathbf{v'}}^{[i-1]}_1,...,\mathbf{v'}^{[i-1]}_{n+i-1}\}$, where
the \\
$v'_{i_1},..., v'_{i_{n_i}}$ correspond exactly to the $v_{i_1},..., v_{i_{n_i}}$. That is, if
$v_{i_j} = \mathbf{v}^{[i-1]}_l(\mathbf{c})$, then $v'_{i_j} = \mathbf{v'}^{[i-1]}_l(\mathbf{c},\mathbf{\overset{\rightharpoonup}{x}})$.

The directional derivative of $f$ at $\mathbf{c}$ in direction of $\mathbf{\overset{\rightharpoonup}{x}}$ is then simply
\[J_{f}(\mathbf{c}) \cdot \mathbf{\overset{\rightharpoonup}{x}} = P_Y \cdot \mathbf{v'}^{[\mu]}.\]

\begin{example}\label{Example griewank}
Let $c_1,c_2,x'_1,x'_2 \in \R$. The computation of $J_f((c_1,c_2)) \cdot \left(\begin{array}{c} x'_1 \\ x'_2 \end{array}\right) $ with $f: \R^2 \to \R^2$ given by
\[f(x_1,x_2) = \left( \begin{array}{c}\exp(x_1) * \sin(x_1 + x_2) \\ x_2 \end{array}\right) \]
has five \emph{evaluation trace pairs} $[\mathbf{v}^{[0]},\mathbf{v'}^{[0]}],...,[\mathbf{v}^{[5]},\mathbf{v'}^{[5]}]$, where
\begin{align*}
 \mathbf{v}^{[0]} = (c_1, c_2, 0, 0, 0, 0, 0)\ \ \textrm{and} \ \ \mathbf{v'}^{[0]} =(x'_1, x'_2, 0, 0, 0, 0, 0)
\end{align*}
and 
\begin{align*}
\mathbf{\mathbf{v}}^{[5]}= 
\left(\begin{array}{c} c_1 \\ c_2 \\ \exp(c_1) \\ c_1 + c_2  \\ \sin(c_1+c_2) \\ \exp(c_1) * \sin(c_1+c_2) \\ c_2 \end{array} \right),
\end{align*}
\begin{align*}
\mathbf{\mathbf{v'}}^{[5]}= 
\left(\begin{array}{c} x'_1 \\ x'_2 \\ \exp(c_1)x'_1 \\ x'_1 + x'_2  \\ \cos(c_1+c_2)(x'_1+x'_2) \\ 
\sin(c_1+c_2)\exp(c_1)x'_1 + \exp(c_1)\cos(c_1+c_2)(x'_1+x'_2) \\ x'_2\end{array} \right).
\end{align*}
Then
$J_f((c_1,c_2)) \cdot \left( \begin{array}{c} x'_1 \\ x'_2 \end{array} \right) = P_Y \cdot \mathbf{\mathbf{v'}}^{[5]}$, which is
\begin{align*} \left( \begin{array}{c} \big(\sin(c_1+c_2)\exp(c_1) + \exp(c_1)\cos(c_1+c_2)\big)x'_1 + \exp(c_1)\cos(c_1+c_2)x'_2 \\ x'_2 \end{array}\right).
\end{align*}
\begin{figure}
\centering
\includegraphics[scale = 0.99]{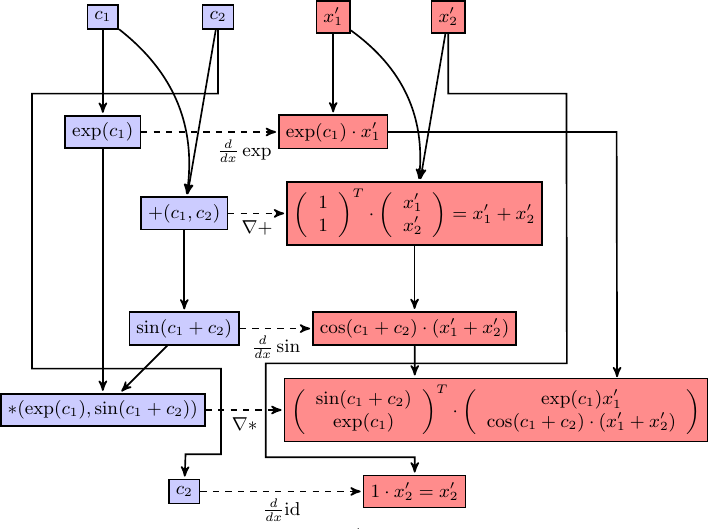}
\caption{Computational graph for Example \ref{Example griewank} with elements of $\mathbf{v}^{[5]}$ in blue and of $\mathbf{v'}^{[5]}$ in red.}\label{graph for griewank}
\end{figure}
\end{example}

Note that in the evaluation process, given the $\Phi_i$, each pair $[\mathbf{v}^{[i]},\mathbf{v'}^{[i]}]$ depends only on the previous pair $[\mathbf{v}^{[i-1]},\mathbf{v'}^{[i-1]}]$ and the given vectors 
$\mathbf{c}, \mathbf{\overset{\rightharpoonup}{x}}$. (Since $\mathbf{v}^{[i]} = \Phi_i(\mathbf{v}^{[i-1]})$ and 
$\mathbf{v'}^{[i]}=J_{\Phi_i}(\mathbf{v}^{[i-1]}(\mathbf{c})) \cdot \mathbf{v'}^{[i-1]}$.) Therefore, in an implementation, one can actually overwrite $[\mathbf{v}^{[i-1]},\mathbf{v'}^{[i-1]}]$ by 
$[\mathbf{v}^{[i]},\mathbf{v'}^{[i]}]$ in each step.

Note further that the $(n+i)$-th entry in each pair $[\mathbf{v}^{[i]},\mathbf{v'}^{[i]}]$ is of the form 
\begin{align}\label{Griewank pairs}\left(\varphi_i(v_{i_1},...,v_{i_{n_i}} ), \nabla  \varphi_i(v_{i_1},...,v_{i_{n_i}} ) \cdot \left( \begin{array}{c} v'_{i_1} \\ \vdots \\ v'_{i_{n_i}} \end{array} \right)\right) \in \R^2,
\end{align} i.e. consisting of a value of $\varphi_i$ and a directional derivative of this elementary function. Since the previous $n+i-1$ entries are identical to the first\\ $n+i-1$ entries of $[\mathbf{v}^{[i-1]},\mathbf{v'}^{[i-1]}]$, the computation of $[\mathbf{v}^{[i]},\mathbf{v'}^{[i]}]$ is effectively the computation of \eqref{Griewank pairs}.

We summarize the discussion of this Section:

\begin{theorem}
By the above, given $\mathbf{c} \in X \subset \R^n$ and $\mathbf{\overset{\rightharpoonup}{x}} \in \R^n$, the evaluation of $J_f(\mathbf{c}) \cdot \mathbf{\overset{\rightharpoonup}{x}}$ of an automatically differentiable function $f: X \to \R^m$ can be achieved by computing the evaluation trace 
pairs $[\mathbf{v}^{[i]},\mathbf{v'}^{[i]}]$. This process is equivalent to the computation of the pairs \eqref{Griewank pairs}.
\end{theorem}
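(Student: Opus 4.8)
Since the theorem merely collects the observations made above, the plan is to organise them into three ingredients and check each one. First I would record that every automatically differentiable $f$ admits a representation $f = P_Y \circ \Phi_\mu \circ \cdots \circ \Phi_1 \circ P_X$ of the stated shape; second, that the evaluation trace pair $[\mathbf{v}^{[i]},\mathbf{v'}^{[i]}]$ realises the $i$-th partial composition together with its linearisation at $\mathbf{c}$; and third, that passing from one pair to the next is literally the evaluation of \eqref{Griewank pairs}.

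For the first ingredient I would induct on the structure supplied by Definition \ref{definition automatically differentiable}. Each scalar component $f_j$ is assembled from finitely many elementary functions together with a record of which intermediate value feeds into which; gathering these data over $j = 1,\dots,m$ produces a finite multiset of $\mu$ elementary functions and an acyclic dependency relation (acyclicity being exactly the statement that the recursion in Definition \ref{definition automatically differentiable} terminates). Choosing a topological ordering $\varphi_1,\dots,\varphi_\mu$ of this relation, every argument of $\varphi_i$ appears among the coordinates $v_1,\dots,v_{n+i-1}$ already produced, so the elementary transitions $\Phi_i$ and the projection $P_Y$ can be defined as in the text --- $\Phi_i$ on a suitable open subset of $H$ containing all evaluation traces of points of $X$, and $P_Y$ the coordinate projection onto the $m$ slots holding $f_1,\dots,f_m$ --- and one checks directly that $f = P_Y \circ \Phi_\mu \circ \cdots \circ \Phi_1 \circ P_X$ on $X$. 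Differentiating this identity at $\mathbf{c}$ by the chain rule, and using that the linear maps $P_X,P_Y$ are their own derivatives, gives \eqref{matrix-vector product Griewank}.

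For the second ingredient I would prove by induction on $i$ that $\mathbf{v}^{[i]}(\mathbf{c}) = (\Phi_i \circ \cdots \circ \Phi_1 \circ P_X)(\mathbf{c})$ and $\mathbf{v'}^{[i]} = \Phi'_{i,\mathbf{c}} \cdots \Phi'_{1,\mathbf{c}} \cdot P_X \cdot \mathbf{\overset{\rightharpoonup}{x}}$. The base case $i = 0$ is the definition of $\mathbf{v}^{[0]}$ and $\mathbf{v'}^{[0]}$; the inductive step is immediate from $\mathbf{v}^{[i]} = \Phi_i(\mathbf{v}^{[i-1]})$ and $\mathbf{v'}^{[i]} = \Phi'_{i,\mathbf{c}} \cdot \mathbf{v'}^{[i-1]}$ together with the hypothesis. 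Taking $i = \mu$ and applying $P_Y$ then yields $P_Y \cdot \mathbf{v'}^{[\mu]} = J_f(\mathbf{c}) \cdot \mathbf{\overset{\rightharpoonup}{x}}$ by \eqref{matrix-vector product Griewank}, which is the first claim of the theorem. The third ingredient is then a direct inspection of the shapes displayed above: from the form of $\Phi_i$ and of its Jacobian \eqref{Phi'} one sees that both $\Phi_i$ and $\Phi'_{i,\mathbf{c}}$ act as the identity on all coordinates except the $(n+i)$-th, so $[\mathbf{v}^{[i]},\mathbf{v'}^{[i]}]$ copies the first $n+i-1$ coordinates of $[\mathbf{v}^{[i-1]},\mathbf{v'}^{[i-1]}]$ and has $(n+i)$-th coordinate pair exactly \eqref{Griewank pairs}; hence generating the whole trace is, up to copying unchanged coordinates, the successive computation of the pairs \eqref{Griewank pairs}.

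I expect the only genuinely load-bearing step to be the first one: making the choice of ordering and the assignment of intermediate quantities to coordinate slots completely explicit, and checking that each $\Phi_i$ is $C^1$ on a neighbourhood of $\mathbf{v}^{[i-1]}(\mathbf{c})$ in $H$ --- which is where one actually uses that $f$ is automatically differentiable, so that the elementary evaluation of $f$ never leaves the domains $U_i$. Everything downstream of that is a routine short induction together with the triviality that a linear map equals its own derivative.
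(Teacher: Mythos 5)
Your proposal is correct and follows essentially the same route as the paper, which offers no separate proof but lets the theorem summarise the preceding construction: the decomposition $f = P_Y \circ \Phi_\mu \circ \cdots \circ \Phi_1 \circ P_X$, the inductive identification of $[\mathbf{v}^{[i]},\mathbf{v'}^{[i]}]$ with the partial compositions and their chain-rule linearisations, and the observation that only the $(n+i)$-th coordinate pair changes at each step and equals \eqref{Griewank pairs}. Your write-up is merely more explicit than the text about the topological ordering of the $\varphi_i$ and the domain issues for the $\Phi_i$, both of which the paper acknowledges only informally.
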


The following section is concerned with a method which uses this last fact directly from the start. The approach about to be described also provides a better understanding on how an Automatic Differentiation system could actually be implemented. A question which may not be quite clear from the discussion so far.

\section{Forward AD---An approach using Dual Numbers}\label{section dual numbers}

Many descriptions and implementation of Forward AD actually use a slightly different approach than the elementary one that we have just described. Instead of expressing the function whose derivative one wants to compute as a composition, the main idea in this `alternative' approach\footnote{Indeed, we will see at the end of this Section, that Forward AD using dual numbers is completely equivalent to the method of expressing $f$ as $P_Y \circ \Phi_{\mu} \circ \cdots \circ \Phi_1 \circ P_X$.} is to lift this function (and all elementary functions) to (a subset of) the algebra of \emph{dual numbers} $\mathcal{D}$.  This method has, for example, been described in \cite{kalman-2002a}, \cite{pearlmutter-siskind-popl-2007} and \cite{Rall1986}.

Dual numbers, introduced by Clifford \cite{Clifford1873}, are defined as $\mathcal{D} := (\R^2, +, \cdot)$, where addition is defined component-wise, as usual, and multiplication is defined as 
\[(x_1,y_1) \cdot (x_2,y_2) := (x_1x_2, x_1y_2+y_1x_2), \ \ \ \forall \ (x_1,y_1),(x_2,y_2) \in \R^2.\]
It is easy to verify that $\D$ with these operations is an associative and commutative algebra over $\R$ with multiplicative unit $(1,0)$ and that the element $\varepsilon := (0,1)$ is nilpotent of order two. \footnote{$\varepsilon$ is sometimes referred to as an infinitesimal in the literature. The correct interpretation of this is probably that one can replace dual numbers by elements from non-standard analysis in the context of AD. However, this approach is actually unnecessary and, given the complexity of non-standard analysis, we will not consider it here.} 

Analogously to a complex number, we write a dual number $z = (x,y)$ as $z = x + y \varepsilon$, where we identify each $x \in \R$ with $(x,0)$. We will further use the notation $(x,x')$ instead of $(x,y)$, i.e. we write $z = x + x' \varepsilon$. The $x'$ in this representation will be referred to as the \emph{dual part} of $z$.

We now define an extension of a differentiable, real-valued function\\
$h: X \to \R$, defined on open $X \subset \R^n$, to a 
function $\widehat{h}: \D^{n} \supset X \times \R^{n} \to \D$ defined on a subset of the dual numbers, by setting
\begin{align}\label{definition extension dual numbers}
\widehat{h}(x_1+x_1' \varepsilon, ...,x_{n}+x'_{n}\varepsilon ) := h(x_1,...,x_{n}) 
+\left( \nabla h(x_1,...,x_{n}) \cdot \left( \begin{array}{c} x'_1 \\ \vdots \\ x'_{n} \end{array} \right) \right) \cdot \varepsilon.
\end{align}
This definition easily extends to differentiable functions $f: X \to \R^m$, where\\ 
$\widehat{f}: \D^n \supset X \times \R^n \to \D^m$ is defined via
\begin{align}\label{extension dual numbers mulit-dimensional}\widehat{f}(x_1+x_1' \varepsilon, ...,x_{n}+x'_{n}\varepsilon)
&:=\left( \begin{array}{c} \widehat{f}_1(x_1+x_1' \varepsilon, ...,x_{n}+x'_{n}\varepsilon) \\ \vdots \\ \widehat{f}_m(x_1+x_1' \varepsilon, ...,x_{n}+x'_{n}\varepsilon) \end{array} \right)\\[1ex]\notag
&\ = f(x_1,...,x_n) + \left( J_f(x_1,...,x_n) \cdot \left( \begin{array}{c} x'_1 \\ \vdots \\ x'_{n} \end{array} \right) \right)\varepsilon.
\end{align}

The following statement shows that definition \eqref{definition extension dual numbers} makes sense. I.e., that it is compatible with the natural extension of functions which are defined via usual arithmetic, i.e. polynomials, and analytic functions. That is:

\begin{proposition}\label{prop. dual numbers def. makes sense}
Definition \eqref{definition extension dual numbers} is compatible with the ``natural'' extensions of 
\begin{itemize}\item[(i)] real-valued constant functions
\item[(ii)] projections of the form $(x_1,..,x_n) \mapsto x_k$,
\item[(iii)] the arithmetic operations $+$, $*$ $: \R^2 \to \R$ and\\ 
$/: \{(x_1,x_2) \in \R^2 \ | \ x_2 \neq 0\} \to \R$,  with
\begin{align*}
+(x_1,x_2):= x_1+x_2, \ \ \ *(x_1,x_2):=x_1 \cdot x_2, \ \ \ /(x_1,x_2):= \frac{x_1}{x_2}\ .
\end{align*}
\item[(iv)] (multivariate) polynomials and rational functions
\item[(v)] (multivariate) real analytic functions
\end{itemize}
to subsets of $\D$, $\D^2$ or $\D^n$, respectively.
\end{proposition}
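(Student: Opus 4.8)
The plan is to dispatch (i)--(iii) by direct computation, and then bootstrap to (iv) and (v) via a single compositionality lemma together with an approximation argument. For the base cases I would compute the relevant gradients and substitute them into \eqref{definition extension dual numbers}. A constant $h\equiv c$ has $\nabla h\equiv0$, so $\widehat h(z_1,\dots,z_n)=c$, which is the constant map $\D^n\to\D$; the projection $h(x_1,\dots,x_n)=x_k$ has $\nabla h=e_k$, so $\widehat h(z_1,\dots,z_n)=x_k+x_k'\varepsilon=z_k$, the $k$-th coordinate map on $\D^n$. For the arithmetic operations one has $\nabla(+)=(1,1)$, $\nabla(*)(x_1,x_2)=(x_2,x_1)$, and $\nabla(/)(x_1,x_2)=(1/x_2,\,-x_1/x_2^2)$; plugging these in and comparing termwise shows that $\widehat{+}$, $\widehat{*}$ are exactly addition and multiplication in $\D$, while $\widehat{/}$ is multiplication by the $\D$-inverse $(x_2+x_2'\varepsilon)^{-1}=x_2^{-1}-x_2'x_2^{-2}\varepsilon$, which exists precisely when the real part $x_2$ is nonzero. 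In other words, the ``natural'' extension of the three operations is nothing but the (partial) algebra structure of $\D$, and each check is a one-line verification.

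The conceptual core is a \emph{chain-rule lemma}: if $g$ is differentiable and $f=(f_1,\dots,f_\ell)$ is differentiable with $f(X)$ contained in the domain of $g$, then $\widehat{(f_1,\dots,f_\ell)}=(\widehat{f_1},\dots,\widehat{f_\ell})$ and $\widehat{g\circ f}=\widehat g\circ\widehat f$ on $X\times\R^n$. The first identity is immediate from \eqref{extension dual numbers mulit-dimensional}. For the second I would expand both sides using \eqref{definition extension dual numbers}: the left side has real part $g(f(\mathbf x))$ and dual part $\big(\nabla(g\circ f)(\mathbf x)\cdot\mathbf{\overset{\rightharpoonup}{x}}\big)\varepsilon$, which by the classical chain rule equals $\big(J_g(f(\mathbf x))\,J_f(\mathbf x)\,\mathbf{\overset{\rightharpoonup}{x}}\big)\varepsilon$; the right side, obtained by feeding the dual vector $f(\mathbf x)+\big(J_f(\mathbf x)\mathbf{\overset{\rightharpoonup}{x}}\big)\varepsilon$ into $\widehat g$, has real part $g(f(\mathbf x))$ and dual part $\big(\nabla g(f(\mathbf x))\cdot J_f(\mathbf x)\mathbf{\overset{\rightharpoonup}{x}}\big)\varepsilon$, the same thing. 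Given the lemma, (iv) follows by induction on the number of arithmetic operations used to assemble a polynomial or rational function out of constants and projections, each inductive step being an instance of the lemma composed with a base case from (iii). I would also note that the ``natural'' extension of a polynomial or rational function does not depend on the chosen expression: since $\D$ is a commutative $\R$-algebra, every polynomial identity valid over $\R$ persists over $\D$, and for a quotient $p/q$ the element $q(\mathbf x+\mathbf{\overset{\rightharpoonup}{x}}\varepsilon)$ is invertible in $\D$ exactly when $q(\mathbf x)\neq0$, i.e. on the natural real domain.

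For (v) I would approximate by Taylor polynomials. A real analytic $h$ agrees, near any point $\mathbf a$ of its domain, with the sum of its multivariate Taylor series; let $p_N$ be the partial sums. By (iv) and \eqref{definition extension dual numbers}, $\widehat{p_N}(\mathbf x+\mathbf{\overset{\rightharpoonup}{x}}\varepsilon)=p_N(\mathbf x)+\big(\nabla p_N(\mathbf x)\cdot\mathbf{\overset{\rightharpoonup}{x}}\big)\varepsilon$, and the ``natural'' extension of $h$ is by definition $\lim_{N\to\infty}\widehat{p_N}(\mathbf x+\mathbf{\overset{\rightharpoonup}{x}}\varepsilon)$. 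Since a power series may be differentiated term by term on the interior of its domain of convergence, $p_N(\mathbf x)\to h(\mathbf x)$ and $\nabla p_N(\mathbf x)\to\nabla h(\mathbf x)$; because convergence in $\D\cong\R^2$ is componentwise, the limit is $h(\mathbf x)+\big(\nabla h(\mathbf x)\cdot\mathbf{\overset{\rightharpoonup}{x}}\big)\varepsilon=\widehat h(\mathbf x+\mathbf{\overset{\rightharpoonup}{x}}\varepsilon)$, as claimed. Equivalently, one can observe directly that, since $\varepsilon^2=0$, every term of the Taylor series with total $\varepsilon$-degree $\ge2$ vanishes upon substituting $x_i\mapsto x_i+x_i'\varepsilon$, leaving exactly the right-hand side of \eqref{definition extension dual numbers}; the partial-sum formulation is merely a way to make this rearrangement rigorous.

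I expect the main obstacle to be not any single computation but the bookkeeping around the phrase ``natural extension'': pinning down its meaning so that (iv) and (v) are well-posed (representation-independence for polynomials and rational functions, and the domain issues for $/$ and for quotients), and, in (v), legitimately interchanging the infinite summation with coordinatewise evaluation in $\D$ and with differentiation. All of these ultimately rest on $\D$ being a finite-dimensional commutative $\R$-algebra in which $\varepsilon$ is nilpotent of order two, so once that is invoked the remaining steps are the routine verifications sketched above.
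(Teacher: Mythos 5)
Your proposal is correct and follows essentially the same route as the paper: direct computation for (i)--(iii), reduction of (iv) to (iii) via the compositionality/chain-rule lemma (the paper's Proposition \ref{proposition dual numbers quasi-compositions}), and a Taylor-polynomial approximation with termwise differentiation of the power series for (v). The extra remarks on representation-independence and on the nilpotency of $\varepsilon$ killing all higher-order Taylor terms are sound additions but do not change the argument.
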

\begin{proof}
(i) and (ii) follow easily from the definition.

(iii): We have
\begin{align*}
\widehat{+}\big(x_1+x'_1\varepsilon ,x_2+x'_2 \varepsilon \big) 
=& +(x_1,x_2) + \left( \nabla +(x_1,x_2) \cdot \left( \begin{array}{c} x_1' \\ x_{2}' \end{array} \right) \right)\varepsilon \\
&= (x_1+x_2) + \left( \left( \begin{array}{c} 1 \\ 1 \end{array} \right)^T \cdot 
\left( \begin{array}{c} x_1' \\ x_{2}' \end{array} \right) \right)\varepsilon \\
&= (x_1+x_2) + (x'_1+x'_2)\varepsilon\\
&= (x_1+x'_1 \varepsilon) + (x_2+x'_2 \varepsilon) 
\end{align*} 
and, since $ \varepsilon^2=0 $,
\begin{align*}
\widehat{*}(x_1+x'_1\varepsilon ,x_2+x'_2 \varepsilon ) 
&= *(x_1, x_2) + \left( \nabla *(x_1,x_2) \cdot \left( \begin{array}{c} x_1' \\ x_{2}' \end{array} \right) \right)\varepsilon \\ 
&= (x_1 \cdot x_2) + \left( \left( \begin{array}{c} x_2 \\ x_1 \end{array} \right)^T \cdot 
\left( \begin{array}{c} x_1' \\ x_{2}' \end{array} \right) \right)\varepsilon \\
&= (x_1 \cdot x_2) + (x_2 x'_1 + x_1 x'_2)\varepsilon  \\
&= (x_1 + x'_1 \varepsilon) \cdot (x_2 + x'_2 \varepsilon).
\end{align*}
Finally, considering division, it is easy to see that the multiplicative inverse of a dual numbers $x + x' \varepsilon$ is defined if, and only if, $x \neq 0$ and given by $\frac{1}{x} + \left( - \frac{x'}{x^2} \right) \varepsilon $.

Then, for $x_2 \neq 0$, since $ \varepsilon^2=0 $,
\begin{align*}
\widehat{/}(x_1+x'_1\varepsilon, x_2+x'_2\varepsilon)
&= /(x_1, x_2) + \left( \nabla / (x_1, x_2) \cdot \left( \begin{array}{c} x_1' \\ x_{2}' \end{array} \right) \right)\varepsilon \\ 
&= \frac{x_1}{x_2} + \left( \left( \begin{array}{c} \frac{1}{x_2} \\[1ex] -\frac{x_1}{x_2^2} \end{array} \right)^T \cdot 
\left( \begin{array}{c} x_1' \\ x_{2}' \end{array} \right) \right)\varepsilon \\ 
&= \frac{x_1}{x_2} + \left( \frac{x'_1}{x_2} - \frac{x_1\cdot x'_2}{x_2^2} \right) \varepsilon\\
&= (x_1 + x'_1 \varepsilon) \cdot \left( \frac{1}{x_2} + \left( - \frac{x'_2}{x_2^2} \right) \varepsilon \right) \\
&= \frac{x_1 + x'_1 \varepsilon}{x_2 + x'_2 \varepsilon} \ . 
\end{align*}

(iv): This will follow from Proposition \ref{proposition dual numbers quasi-compositions} in connection with (iii).

(v): We will recall the definition of multi-variate Taylor series in Section \ref{section Dual and Taylor}. For the moment, let $T_k(h;\mathbf{c})$ denote the $k$-th degree (multi-variate) Taylor polynomial of $h: X \to \R$ about $\mathbf{c} \in X$. Since $h$ is real analytic, we have  
\[T_k(h;\mathbf{c})(x_1,...,x_{n}) \to  h(x_1,...,x_n) \ \ \ (k \to \infty)\]
for all $(x_1,...,x_{n}) \in V$, where $V$ is an open neighbourhood of $\mathbf{c}$. It is well-known, that then 
\[\frac{\partial}{\partial x_j} T_k(h;\mathbf{c})(x_1,...,x_{n}) \to \frac{\partial}{\partial x_j} h(x_1,...,x_n) \ \ \ (k \to \infty) \]
on $V$ (see for example \cite[Chapter II.1]{John1975}). Since addition and multiplication are continuous, then also 
\[\left( \nabla T_k(h;\mathbf{c})(x_1,...,x_{n}) \cdot \mathbf{\overset{\rightharpoonup}{x}} \right) \to \left( \nabla h(x_1,...,x_n) \cdot \mathbf{\overset{\rightharpoonup}{x}} \right) \ \ \ (k \to \infty) \]
on $V$, for any fixed $\mathbf{\overset{\rightharpoonup}{x}} = (x'_1,...,x'_n) \in \R^n$. Consequently, 
\[\widehat{T_k}(h;\mathbf{c})(x_1+x'_1 \varepsilon,...,x_{n}+x'_n \varepsilon) \to  \widehat{h}(x_1+x'_1 \varepsilon,...,x_n+x'_n \varepsilon) \ \ \ (k \to \infty),\]
for all $(x_1+x'_1 \varepsilon,...,x_{n}+x'_n \varepsilon) \in V \times \R^n$. 
\end{proof}

To use definition \eqref{definition extension dual numbers} for Automatic Differentiation, we need to show that it behaves well for automatically differentiable functions as defined in Definition \ref{definition automatically differentiable}. Basically, we need to show that \eqref{definition extension dual numbers} is compatible with the chain rule.

\begin{proposition}\label{proposition dual numbers quasi-compositions}
Let $h: X \to \R$ defined on open $X \subset \R^{n}$ be automatically differentiable, $h \notin \{\varphi_i \ | \ i \in I\}$.
Then 
\begin{align}\label{lemma lift to dual numbers}\notag
&\widehat{h}(\mathbf{x}+\mathbf{\overset{\rightharpoonup}{x}}\varepsilon) \\ \notag
&= \widehat{h}_{\ell}(x_{0,1}+ \varepsilon x'_{0,1} ,...,x_{0,n_0}+ \varepsilon x'_{0,n},\widehat{h}_1(x_{1,1}+ \varepsilon x'_{1,1},...,x_{1,n_1}+ \varepsilon x'_{1,n_1}),\\ 
&\ \ \ \ \ ...,\widehat{h}_{\ell-1}
(x_{\ell-1,1}+ \varepsilon x'_{\ell-1,1},...,x_{\ell-1,n_{\ell-1}}+ \varepsilon x'_{\ell-1,n_{\ell-1}})),
\end{align}
for all $\mathbf{x}+\varepsilon \mathbf{\overset{\rightharpoonup}{x}} := (x_1 + \varepsilon x'_1,...,x_n + \varepsilon x'_n) \in X \times \R^n$, with \\
$x_{k,i} + x'_{k,i} \in \{x_1 + \varepsilon x'_1,...,x_n + \varepsilon x'_n\} \cap X_k$.
\end{proposition}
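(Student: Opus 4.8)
The plan is to reduce the claim to a repeated application of the multivariate chain rule, using Definition~\ref{definition automatically differentiable} to control the structure of the composition. Let me write $\mathbf{x} + \varepsilon\mathbf{\overset{\rightharpoonup}{x}}$ for the dual-number input and abbreviate $g := h_{\ell}$ as the outer elementary function, so that by Definition~\ref{definition automatically differentiable}(i) we have, for all $\mathbf{x} \in X$,
\[
h(\mathbf{x}) = g\bigl(x_{0,1},\dots,x_{0,n_0}, h_1(\mathbf{x}_{(1)}), \dots, h_{\ell-1}(\mathbf{x}_{(\ell-1)})\bigr),
\]
where $\mathbf{x}_{(k)} = (x_{k,1},\dots,x_{k,n_k})$ and each coordinate $x_{k,i}$ is one of the input variables $x_1,\dots,x_n$. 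First I would compute the right-hand side of \eqref{lemma lift to dual numbers} by unwinding the definition \eqref{definition extension dual numbers} of $\widehat{h}_k$ applied to dual arguments: the real part is simply $g$ evaluated at the real parts (hence equals $h(\mathbf{x})$), and the dual part is $\nabla g$ evaluated at those real parts, dotted against the vector of dual parts of the arguments of $g$. The arguments of $g$ that are themselves outputs $\widehat{h}_k(\dots)$ contribute the dual part $\nabla h_k(\mathbf{x}_{(k)}) \cdot \mathbf{\overset{\rightharpoonup}{x}}_{(k)}$, again by \eqref{definition extension dual numbers}, where $\mathbf{\overset{\rightharpoonup}{x}}_{(k)}$ is the subvector of $\mathbf{\overset{\rightharpoonup}{x}}$ whose entries match the variables appearing in $\mathbf{x}_{(k)}$.

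The second step is to verify that this dual part agrees with $\nabla h(\mathbf{x}) \cdot \mathbf{\overset{\rightharpoonup}{x}}$, which is exactly the dual part of the left-hand side $\widehat{h}(\mathbf{x} + \varepsilon\mathbf{\overset{\rightharpoonup}{x}})$ by definition \eqref{definition extension dual numbers}. This is precisely the statement of the chain rule: differentiating $h = g(\dots, h_k(\dots), \dots)$ with respect to $x_j$ gives a sum of $\partial g/\partial(\text{direct argument } x_j)$ plus $\sum_k (\partial g/\partial(\text{$k$-th inner slot})) \cdot (\partial h_k/\partial x_j)$, and contracting against $\mathbf{\overset{\rightharpoonup}{x}}$ reproduces exactly the expression obtained in the first step. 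The bookkeeping of which $x_j$ appears in which slot of which $h_k$ is handled by the index conventions already fixed in Definition~\ref{definition automatically differentiable}; one just has to be careful that a single variable $x_j$ may appear in several slots, so the $\partial h_k/\partial x_j$ terms must be summed with the correct multiplicities. Since $\varepsilon^2 = 0$, no higher-order terms survive the multiplications in $\D$, so the real and dual parts are the only data to match.

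The main obstacle I expect is purely notational rather than conceptual: keeping the index set straight when the same input variable is fed into multiple elementary functions, so that the "vector of dual parts" passed into each $\widehat{h}_k$ is the correct restriction of $\mathbf{\overset{\rightharpoonup}{x}}$, and so that the chain-rule sum on the analytic side lines up term-by-term with the sum produced by expanding $\nabla g$ against the dual parts on the dual-number side. A clean way to finesse this is to introduce the "gathered" input map $\iota_k : \R^n \to \R^{n_k}$, $\iota_k(\mathbf{x}) = \mathbf{x}_{(k)}$, which is linear with $J_{\iota_k}$ a $0$--$1$ selection matrix; then $h_k \circ \iota_k$ has gradient $J_{\iota_k}^T \nabla h_k(\mathbf{x}_{(k)})$, the ordinary chain rule applies to the genuine composition $g(\iota_0, h_1\circ\iota_1,\dots,h_{\ell-1}\circ\iota_{\ell-1})$ with no ambiguity, and contracting with $\mathbf{\overset{\rightharpoonup}{x}}$ gives the identity. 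One should also note an implicit induction: the statement presupposes that $\widehat{h}_k$ satisfies \eqref{definition extension dual numbers} for the automatically differentiable (non-elementary) inner functions $h_k$, which is itself what \eqref{definition extension dual numbers} asserts by fiat for all differentiable functions — so strictly speaking the proposition is an identity between two closed-form expressions, both defined directly via \eqref{definition extension dual numbers}, and the content is exactly the chain rule for $\nabla h$; no induction on the nesting depth is actually needed once that is recognized.
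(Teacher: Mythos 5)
Your proposal is correct and follows essentially the same route as the paper: both compute the real and dual parts of each side directly from definition \eqref{definition extension dual numbers} and then match the dual parts via the multivariate chain rule applied to the genuine composition $\mathbf{x} \mapsto (\mathbf{x_0}, h_1(\mathbf{x_1}),\dots,h_{\ell-1}(\mathbf{x_{\ell-1}}))$ followed by $h_\ell$, with your selection maps $\iota_k$ playing the role of the explicit $0$--$1$ Jacobian block the paper writes out. Your closing observation that no induction on nesting depth is needed here also agrees with the paper, which defers the induction to the subsequent theorem.
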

\begin{proof}
We prove this statement by direct computation.\footnote{A more elegant proof can be given by writing $h$ as a composition and using the fact that the push-forward operator (see Section \ref{section diffgeom}) is a functor.}
In the following, denote 
\begin{align*}&\mathbf{x_k} := (x_{k,1},...,x_{k,n_{k}}) \in X_k \subset \R^{n_k} \ \ 
\textrm{and} \ \ &\mathbf{\overset{\rightharpoonup}{x}_k} := (x'_{k,1} ,...x'_{k,n_{k}}) \in X_k \times \R^{n_k}.
\end{align*}
Then the right hand-side of equation \eqref{lemma lift to dual numbers} is equal to 
\begin{align}\label{lemma right hand side}
\notag
&\widehat{h_{\ell}}\left(\mathbf{x_0}, h_1(\mathbf{x_1})+\left( \nabla h_1(\mathbf{x_1}) \cdot \mathbf{\overset{\rightharpoonup}{x}_k} \right)
\varepsilon, \right. \\ &\left. 
\ \ \ \ \ ...,
h_{\ell-1}(\mathbf{x_{\ell-1}})+\left( \nabla h_{\ell-1}(\mathbf{x_{\ell-1}}) \cdot \mathbf{\overset{\rightharpoonup}{x}_{\ell-1}}\right)\varepsilon\right)\\ \notag
&= h_{\ell}\left(\mathbf{x_0},h_1(\mathbf{x_1}),...,h_{\ell-1}(\mathbf{x_{\ell-1}})\right)
+ \left( \nabla h_{\ell}\left(...\right) \cdot 
\left(\begin{array}{c} x'_{0,1} \\ \vdots \\  x'_{0,n} \\ \nabla h_1(\mathbf{x_1}) \cdot \mathbf{\overset{\rightharpoonup}{x}_1} \\ \vdots \\ 
\nabla h_{\ell-1}(\mathbf{x_{\ell-1}}) \cdot \mathbf{\overset{\rightharpoonup}{x}_{\ell-1}} \end{array}\right) \right) \varepsilon,
\end{align} 
where
\begin{align*}
\nabla h_{\ell}\left(...\right) &:= \nabla h_{\ell} \left(\mathbf{x_0},h_1(\mathbf{x_1}),...,h_{\ell-1}(\mathbf{x_{\ell-1}}) \right)\\
&=\frac{dh_{\ell}}{d(\mathbf{x_0},h_1(\mathbf{x_1}),...,h_{\ell-1}(\mathbf{x_{\ell-1}}))}(\mathbf{x_0},h_1(\mathbf{x_1}),...,h_{\ell-1}
(\mathbf{x_{\ell-1}})).
\end{align*}

The left hand side of \eqref{lemma lift to dual numbers} is obviously equal to
\begin{align}\label{lemma left hand side}
&h(\mathbf{x}) +\left( \nabla h(\mathbf{x}) \cdot \mathbf{\overset{\rightharpoonup}{x}} \right) \cdot \varepsilon.
\end{align}
By assumption, $h(\mathbf{x}) = h_{\ell}(\mathbf{x_0},h_1(\mathbf{x_1}),...,h_{\ell-1}(\mathbf{x_{\ell-1}}))$. 
Further, by the chain rule,
\begin{align*}
\nabla h(\mathbf{x})  \cdot \mathbf{\overset{\rightharpoonup}{x}} &= \frac{d h_{\ell}}{d\mathbf{x}}(\mathbf{x_0},h_1(\mathbf{x_1}),...,h_{\ell-1}(\mathbf{x_{\ell-1}}))  \cdot \mathbf{\overset{\rightharpoonup}{x}} \\[1ex]
&= \nabla h_{\ell} \left(\mathbf{x_0},h_1(\mathbf{x_1}),...,h_{\ell-1}(\mathbf{x_{\ell-1}}) \right)\\
&\ \ \ \ \cdot \frac{d\left( \mathbf{x} \mapsto (\mathbf{x_0},h_1(\mathbf{x_1}),...,h_{\ell-1}(\mathbf{x_{\ell-1}})\right)}{d\mathbf{x}}
\left(\mathbf{x} \right) \cdot \mathbf{\overset{\rightharpoonup}{x}}.
\end{align*}
Now, $\frac{d\left( \mathbf{x} \mapsto (\mathbf{x_0},h_1(\mathbf{x_1}),...,h_{\ell-1}(\mathbf{x_{\ell-1}})\right)}{d\mathbf{x}}
\left(\mathbf{x} \right) $ equals
\begin{align*} \left( \begin{array}{ccc} \frac{\partial (\mathbf{x} \mapsto x_{0,1})}{\partial x_1}(\mathbf{x}) & 
\cdots & \frac{\partial (\mathbf{x} \mapsto x_{0,1})}{\partial x_{n}}(\mathbf{x}) \\
 \vdots & \vdots & \vdots \\
 \frac{\partial (\mathbf{x} \mapsto x_{0,n_0})}{\partial x_1}(\mathbf{x}) & \cdots & \frac{\partial (\mathbf{x} \mapsto x_{0,n_0})}{\partial x_{n}}(\mathbf{x}) \\[1ex]
 \frac{\partial (\mathbf{x} \mapsto h_1(\mathbf{x_1}))}{\partial x_1}(\mathbf{x}) & \cdots & \frac{\partial (\mathbf{x} \mapsto h_1(\mathbf{x_1}))}{\partial x_{n}}(\mathbf{x}) \\[1ex]
 \vdots & \vdots & \vdots \\
 \frac{\partial (\mathbf{x} \mapsto h_{\ell-1}(\mathbf{x_{\ell-1}}))}{\partial x_1}(\mathbf{x}) & \cdots & \frac{\partial (\mathbf{x} \mapsto h_{\ell-1}(\mathbf{x_{\ell-1}}))}{\partial x_{n}}(\mathbf{x}) \end{array} \right) 
\end{align*}
Hence,
\begin{align*}
\frac{d\left( \mathbf{x} \mapsto (\mathbf{x_0},h_1(\mathbf{x_1}),...,h_{\ell-1}(\mathbf{x_{\ell-1}})\right)}{d\mathbf{x}}
\left(\mathbf{x} \right) \cdot \mathbf{\overset{\rightharpoonup}{x}}
= \left(\begin{array}{c} x'_{0,1} \\ \vdots \\  x'_{0,n} \\ \nabla h_1(\mathbf{\overset{\rightharpoonup}{x}_1}) \cdot \mathbf{\overset{\rightharpoonup}{x}_1} \\ \vdots \\ \nabla h_{\ell-1}(\mathbf{x_{\ell-1}}) \cdot \mathbf{\overset{\rightharpoonup}{x}_{\ell-1}} \end{array}\right)
\end{align*}
Thus, \eqref{lemma right hand side} equals \eqref{lemma left hand side} and we are done. 
\end{proof}

We can now automatically compute directional derivatives $J_f(\mathbf{c}) \cdot \mathbf{\overset{\rightharpoonup}{x}}$ of an automatically differentiable function $f: X \to \R^m$, on open $X \subset \R^n$, at fixed $\mathbf{c} = (c_1,...,c_n) \in \R^n$ in direction of fixed 
$\mathbf{\overset{\rightharpoonup}{x}} = (x_1',..., x_n') \in \R^n$ by computing the directional derivatives
$\nabla f_j(\mathbf{c}) \cdot \mathbf{\overset{\rightharpoonup}{x}}$ in the following way:

\begin{theorem}\label{thm. result process dual numbers}
Assume that definition \ref{definition extension dual numbers} is implemented for all elementary functions in the set $\{\varphi_i \ |\ i \in I\}$. Then the directional derivative
$\nabla f_j(\mathbf{c}) \cdot \mathbf{\overset{\rightharpoonup}{x}}$ of an automatically differentiable function
$f_j: X \to \R$ can be computed `automatically' through extending $f_j$ to $X \times \R^n \subset \D^n$, and evaluating the dual part of $\widehat{f}_j(c_1+x'_1\varepsilon,...,c_n+x'_n\varepsilon)$.
\end{theorem}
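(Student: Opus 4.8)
The plan is to prove the statement by induction on the inductive structure of the automatically differentiable function $f_j$ given in Definition \ref{definition automatically differentiable}(i), showing that the dual number produced by the ``program'' for $f_j$ — which at every node simply invokes the implemented extension $\widehat{\varphi_i}$ of the relevant elementary function — is exactly $\widehat{f_j}(c_1+x'_1\varepsilon,\ldots,c_n+x'_n\varepsilon)$ in the sense of Definition \eqref{definition extension dual numbers}, whose dual part is by construction $\nabla f_j(\mathbf{c}) \cdot \mathbf{\overset{\rightharpoonup}{x}}$. Since $\mathbf{c}\in X$, the composition defining $f_j$ is valid at $\mathbf{c}$, so all intermediate real parts stay in the relevant domains $X_k$, $U_i$ and every extension invoked along the way is in fact defined.

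For the base case, suppose $f_j \in \{\varphi_i \mid i \in I\}$. By hypothesis, Definition \eqref{definition extension dual numbers} is implemented for every elementary function, so evaluating the program for $f_j$ at $(c_1+x'_1\varepsilon,\ldots,c_n+x'_n\varepsilon)$ is a single call to $\widehat{\varphi_i}$, returning $\varphi_i(\mathbf{c}) + (\nabla\varphi_i(\mathbf{c})\cdot\mathbf{\overset{\rightharpoonup}{x}})\varepsilon$, whose dual part is the desired directional derivative. For the inductive step, write $f_j(\mathbf{x}) = h_\ell(x_{0,1},\ldots,x_{0,n_0},h_1(\ldots),\ldots,h_{\ell-1}(\ldots))$ as in Definition \ref{definition automatically differentiable}(i), where $h_\ell \in \{\varphi_i\mid i\in I\}$ and each $h_k$, $k=1,\ldots,\ell-1$, is automatically differentiable and of strictly smaller complexity. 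Evaluating the program for $f_j$ at the dual input amounts to running the sub-programs for $h_1,\ldots,h_{\ell-1}$ on the appropriate dual inputs $x_{k,i}+x'_{k,i}\varepsilon$ drawn from $\{c_1+x'_1\varepsilon,\ldots,c_n+x'_n\varepsilon\}$, and then feeding the results, together with the remaining inputs $x_{0,i}+x'_{0,i}\varepsilon$, into the implemented extension $\widehat{h_\ell}$. By the induction hypothesis each sub-program returns $\widehat{h_k}(x_{k,1}+x'_{k,1}\varepsilon,\ldots)$ in the sense of \eqref{definition extension dual numbers}, and since $h_\ell$ is elementary the final call returns $\widehat{h_\ell}$ applied to exactly these dual arguments; by Proposition \ref{proposition dual numbers quasi-compositions} this composite equals $\widehat{f_j}(c_1+x'_1\varepsilon,\ldots,c_n+x'_n\varepsilon)$. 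Reading off the dual part and invoking \eqref{definition extension dual numbers} once more yields $\nabla f_j(\mathbf{c})\cdot\mathbf{\overset{\rightharpoonup}{x}}$, completing the induction.

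There is no serious analytic difficulty here; the content is entirely in Propositions \ref{prop. dual numbers def. makes sense} and \ref{proposition dual numbers quasi-compositions}, already established. The main thing to be careful about is the bookkeeping in the inductive step: one must make precise that ``the program for $f_j$'' evaluates by substituting dual values for the variables $x_1,\ldots,x_n$ and propagating them through the elementary-function calls in exactly the nesting pattern of Definition \ref{definition automatically differentiable}(i), so that the arguments of each $\widehat{h_k}$ and of $\widehat{h_\ell}$ match those appearing in Proposition \ref{proposition dual numbers quasi-compositions}, and that the domain condition $\mathbf{c}\in X$ is genuinely what guarantees each intermediate dual number is a legal input to the next extension. (Alternatively, as noted after Proposition \ref{proposition dual numbers quasi-compositions}, one may write $f_j$ as a genuine composition and invoke functoriality of the push-forward discussed in Section \ref{section diffgeom}, but the inductive argument above avoids that forward reference.)
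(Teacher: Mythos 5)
Your proposal is correct and follows essentially the same route as the paper: induction on the inductive structure (depth) of $f_j$ from Definition \ref{definition automatically differentiable}, with the base case handled by the implemented extension of an elementary function and the inductive step resolved by Proposition \ref{proposition dual numbers quasi-compositions}. The only differences are cosmetic (you fold the elementary case into the base case of the induction, and you add a remark on domain bookkeeping that the paper leaves implicit).
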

\begin{proof}
Each $f_j$ is automatically differentiable. By assumption, the case $f_j \in \{\varphi_i \ | \ i \in I\}$ is clear: We simply obtain the pair
$\widehat{f}_j(c_1+x'_1\varepsilon,...,c_n+x'_n\varepsilon)$ by calling $f_j$ and $\nabla f_j$ and computing the 
gradient-vector product. The sought directional derivative is the dual part (second entry) of that pair. 

So assume that there exists real-valued $h_1,...,h_{\ell}$ on open sets $X_k \subset \R^{n_k}$, such that for all 
$\mathbf{x} \in X$, 
\[f_j(\mathbf{x}) = h_{\ell}(x_{0,1},...,x_{0,n_0},h_1(x_{1,1},...,x_{1,n_1}),...,h_{\ell-1}(x_{\ell-1,1},...,x_{\ell-1,n_{\ell-1}})),\]
for suitable $x_{k,j}$, with $h_{\ell} \in \{ \varphi_i \ | \ i \in I\}$ and $h_1,...,h_{\ell-1}$ automatically differentiable. 

We proceed by induction on the depth of $f_j$.

Base case: Assume that each $h_1,...,h_\ell \in \{\varphi_i \ | \ i \in I\}$.  Extending $f_j$ to $X \times \R^n \subset \D^n$ leads to the extension of $h_1,...,h_\ell$ to sets $X_k \times \R^{n_k} \subset \D^{n_k}$. Since definition \ref{definition extension dual numbers} is implemented for all functions in $\{\varphi_i \ |\ i \in I\}$, 
\[\widehat{h}_k(c_{k,1}+x'_{k,1}\varepsilon,...,c_{k,n_k}+x'_{k,n_k}\varepsilon)\]
is defined for all $h_k$ and computed by calling $h_k$ and all $\nabla h_k$ with suitable inputs and computing the 
gradient-vector product. By Proposition \ref{proposition dual numbers quasi-compositions}, the computation of
\[\widehat{h}_{\ell}(c_{0,1}+x'_{0,1}\varepsilon,...,c_{0,n_0}+x'_{0,n_0}\varepsilon, \widehat{h}_1(\cdots),...,\widehat{h}_{\ell-1}(\cdots)),\]
which is performed last, gives $\widehat{f}_j(c_1+x'_1\varepsilon,...,c_n+x'_n\varepsilon)$, whose dual part is $\nabla f_j(\mathbf{c}) \cdot \mathbf{\overset{\rightharpoonup}{x}}$.

Induction step: Assume that $h_{j_0} \in \{h_1,...,h_{\ell-1}\}$ is not an elementary function. Again, we extend $f_j$ to $X \times \R^n \subset \D^n$, which leads to the extension of $h_1,...,h_\ell$ to sets $X_k \times \R^{n_k} \subset \D^{n_k}$. Since $h_{j_0}$ is still automatically differentiable
\[\widehat{h}_{j_0}(c_{j_0,1}+x'_{j_0,1}\varepsilon,...,c_{j_0,n_{j_0}}+x'_{j_0,n_{j_0}}\varepsilon)\] 
is computed by Induction Assumption. Then again, the computation of 
\[\widehat{h}_{\ell}(c_{0,1}+x'_{0,1}\varepsilon,...,c_{0,n_0}+x'_{0,n_0}\varepsilon, \widehat{h}_1(\cdots),...,\widehat{h}_{\ell-1}(\cdots))\]
(note that $h_{\ell}$ is elementary) gives, by Proposition \ref{proposition dual numbers quasi-compositions}, the dual number $\widehat{f}_j(c_1+x'_1\varepsilon,...,c_n+x'_n\varepsilon)$.
\end{proof}

\begin{example}\label{Example dual}
Consider the function $f: \R^2 \to \R$ given by
\begin{align*}
f(x_1,x_2) &= x_2 * \cos(x_1^2+3) = x_2 * \cos(x_1 * x_1+3).
\end{align*}
Let $c_1,c_2,x'_1,x'_2 \in \R$. We evaluate the value of $\widehat{f}$ at $c_1 + x'_1 \varepsilon$ and $c_2 + x'_2 \varepsilon$. By definition \eqref{definition extension dual numbers}, Proposition \ref{prop. dual numbers def. makes sense} and Proposition \ref{proposition dual numbers quasi-compositions},
\begin{align*}
&\widehat{f}(c_1 + x'_1 \varepsilon, c_2 + x'_2 \varepsilon) \\
&= (c_2 + x'_2 \varepsilon) * \cos((c_1 +x'_1 \varepsilon)^2+3)\\
&= (c_2 + x'_2 \varepsilon) * \cos(c_1^2+2c_1x'_1\varepsilon + 3)\\
&= (c_2 + x'_2 \varepsilon) * \cos((c_1^2+3)+2c_1x'_1\varepsilon)\\
&= (c_2 + x'_2 \varepsilon) * (\cos(c_1^2+3)-\sin(c_1^2+3)2c_1x'_1\varepsilon)\\
&= c_2\cos(c_1^2+3) + \left( - 2c_1c_2\sin(c_1^2+3)x'_1 + \cos(c_1^2+3)x'_2 \right) \varepsilon.
\end{align*}
By Theorem \ref{thm. result process dual numbers}, the dual part of this expression is $\nabla f(c_1,c_2) \cdot \left( \begin{array}{c} x'_1 \\ x'_2 \end{array} \right)$. That is,
\[\nabla f(c_1,c_2) \cdot \left( \begin{array}{c} x'_1 \\x'_2 \end{array} \right) = - 2c_1c_2\sin(c_1^2+3)x'_1 + \cos(c_1^2+3))x'_2.\] 
\end{example}
\begin{figure}
\centering
\includegraphics[]{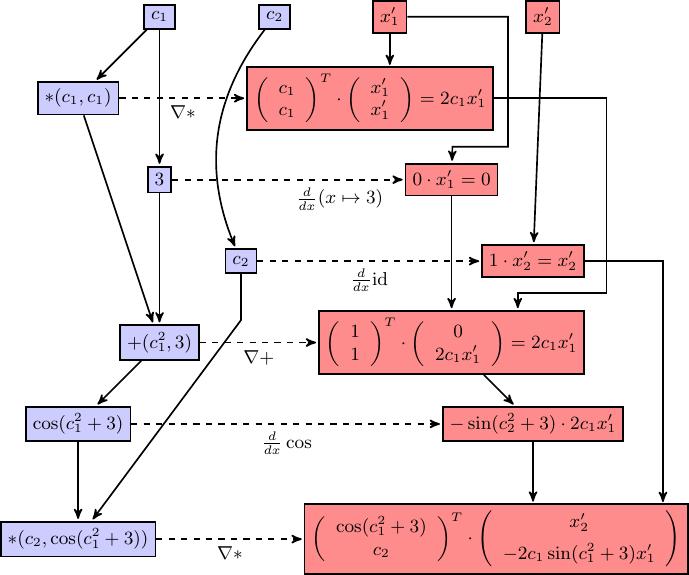}
\caption{Computational graph for Example \ref{Example dual} with primal parts in blue and dual parts in red.}\label{graph for example dual}
\end{figure}

\begin{figure}
\centering
\includegraphics[scale = 0.8]{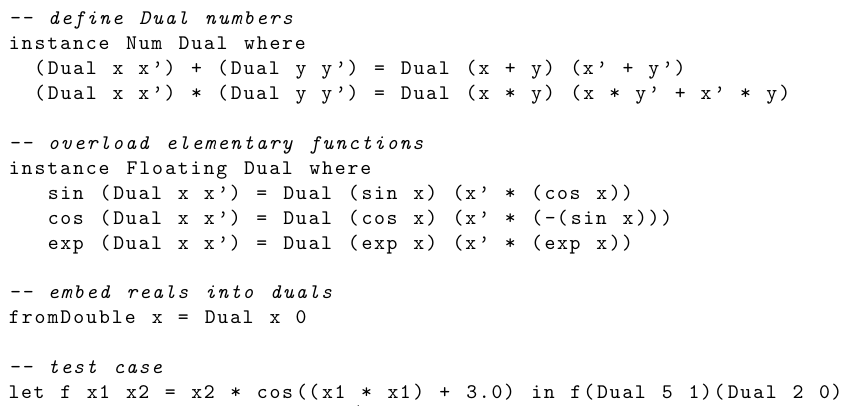}
\caption{Minimal Haskell example, showing the implementation of Forward AD for Example \ref{Example dual} and the test case $\frac{\partial f}{\partial x_1}(5,2)$. Compare also the basically identical work in \cite[Subsection 2.1]{KarczmarczukI}.}\label{Figure Haskell code Dual}
\end{figure}

Thus, a (basic) implementation of an Automatic Differentiation System can be realised by implementing \eqref{definition extension dual numbers} for all elementary functions. Usually, this is done by simply overloading elementary functions. Constant functions will usually be identified with real numbers, which themselves will be lifted to dual number with zero dual part (see Figure \ref{Figure Haskell code Dual}).

Note again that at no time during the described process any symbolic differentiation takes place. Instead, since each `top-level' function $h_{\ell}$ of an automatically differentiable function is elementary, we are computing and passing on pairs
\begin{align}\label{pairs elementary functions in dual numbers Section}
\left( \varphi_i(c_{i,1},...,c_{i,n_i}), \nabla \varphi_i(c_{i,1},...,c_{i,n_i}) \cdot \left( \begin{array}{c} x'_{i,1} \\ \vdots \\ x'_{i,n_i} \end{array} \right)\right) \in \R^2,
\end{align}
which computes `automatically' the directional derivatives $\nabla f_j(\mathbf{c}) \cdot \mathbf{\overset{\rightharpoonup}{x}}$ and, therefore, 
the directional derivative $J_f(\mathbf{c}) \cdot \mathbf{\overset{\rightharpoonup}{x}}$.

Note further that the pairs \eqref{pairs elementary functions in dual numbers Section} are \underline{exactly the same pairs} as the ones in \eqref{Griewank pairs}. This means that the processes described in this and in the previous Section reduce to \underline{exactly the same} computations.

Indeed, if we store the pairs $(c_i,x'_i)$ and \eqref{pairs elementary functions in dual numbers Section} in an array, we obtain the evaluation trace pairs $[\mathbf{v}^{[i]},\mathbf{v'}^{[i]}]$. In summary: 
 
\begin{theorem}
By the above, given $\mathbf{c} \in X \subset \R^n$ and $\mathbf{\overset{\rightharpoonup}{x}} \in \R^n$, the evaluation of $J_f(\mathbf{c}) \cdot \mathbf{\overset{\rightharpoonup}{x}}$ of an automatically differentiable function $f: X \to \R^m$ can be achieved through the lifting of each $f_j$ to a function $\widehat{f}_j: X \times \R^n \to \D$ as defined in \eqref{definition extension dual numbers} and by evaluating 
$\widehat{f}_j(c_1+x'_1\varepsilon,...,c_n+x'_n\varepsilon)$. This process is equivalent to the computation of the evaluation trace pairs $[\mathbf{v}^{[i]},\mathbf{v'}^{[i]}]$, as described in the previous section, and to the computation of the pairs \eqref{pairs elementary functions in dual numbers Section} in suitable order.
\end{theorem}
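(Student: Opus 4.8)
The plan is to assemble the statement from the three results already established and to check that the three computations agree term by term; there is no new mathematical content, only bookkeeping.

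First I would invoke Theorem~\ref{thm. result process dual numbers}: once \eqref{definition extension dual numbers} is implemented for every elementary function in $\{\varphi_i \ | \ i \in I\}$, evaluating $\widehat{f}_j(c_1+x'_1\varepsilon,\dots,c_n+x'_n\varepsilon)$ returns a dual number whose real part is $f_j(\mathbf{c})$ and whose dual part is $\nabla f_j(\mathbf{c})\cdot\mathbf{\overset{\rightharpoonup}{x}}$; performing this for $j=1,\dots,m$ and stacking the dual parts gives $J_f(\mathbf{c})\cdot\mathbf{\overset{\rightharpoonup}{x}}$. This already establishes the first assertion of the theorem.

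Next I would unfold what the evaluation of $\widehat{f}_j$ actually is. By Definition~\ref{definition automatically differentiable} each $f_j$ is assembled from elementary functions, and by Propositions~\ref{prop. dual numbers def. makes sense} and~\ref{proposition dual numbers quasi-compositions} the lift $\widehat{\,\cdot\,}$ is compatible with this assembly; hence evaluating $\widehat{f}_j$ amounts to applying the lifted elementary functions $\widehat{\varphi_i}$ to dual-number arguments of the form $c_{i,1}+x'_{i,1}\varepsilon,\dots,c_{i,n_i}+x'_{i,n_i}\varepsilon$ in an order respecting the dependency structure. By \eqref{definition extension dual numbers}, each such application is exactly the formation of the pair \eqref{pairs elementary functions in dual numbers Section}, i.e.\ a call to $\varphi_i$ and to its partial derivatives followed by a gradient--vector product, with the $c_{i,k}$ the values occurring in the evaluation trace of $f$ and the $x'_{i,k}$ the corresponding entries of the directional-derivative trace (both arise from the same substitution rule). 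Choosing the ordering of the $\varphi_i$ in the construction of the transitions $\Phi_i$ of Section~\ref{section Griewank} to be this order, the pair \eqref{pairs elementary functions in dual numbers Section} coincides with the $(n+i)$-th entry \eqref{Griewank pairs} of $[\mathbf{v}^{[i]},\mathbf{v'}^{[i]}]$; since the remaining entries of $[\mathbf{v}^{[i]},\mathbf{v'}^{[i]}]$ are inherited unchanged from $[\mathbf{v}^{[i-1]},\mathbf{v'}^{[i-1]}]$, storing the input pairs $(c_i,x'_i)$ together with the pairs \eqref{pairs elementary functions in dual numbers Section} in an array of length $n+\mu$ reconstructs the whole sequence $[\mathbf{v}^{[0]},\mathbf{v'}^{[0]}],\dots,[\mathbf{v}^{[\mu]},\mathbf{v'}^{[\mu]}]$. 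The summarising Theorem at the start of Section~\ref{section Griewank} then supplies the remaining equivalence, namely that this trace-pair computation is itself the computation of the pairs \eqref{Griewank pairs}, which closes the chain.

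The only real obstacle is bookkeeping: one must check that the indices selecting the arguments of each $\widehat{\varphi_i}$ (which slots of the current state vector they refer to) are the same in both descriptions, and that the order in which the $\widehat{\varphi_i}$ are applied during the recursive evaluation of $\widehat{f}_j$ can be realised as a single linear ordering of all $\mu$ elementary functions with the property demanded in Section~\ref{section Griewank} that every argument has been evaluated before it is used --- which is exactly what Definition~\ref{definition automatically differentiable} guarantees, via a topological sort of the dependency graph.
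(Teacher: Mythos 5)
Your proposal is correct and follows essentially the same route as the paper: the paper's justification for this summary theorem is precisely the preceding discussion, namely Theorem \ref{thm. result process dual numbers} for the first assertion, the observation that the pairs \eqref{pairs elementary functions in dual numbers Section} coincide with the pairs \eqref{Griewank pairs}, and the remark that storing the input pairs $(c_i,x'_i)$ together with these in an array reconstructs the evaluation trace pairs $[\mathbf{v}^{[i]},\mathbf{v'}^{[i]}]$. Your additional attention to the ordering (that the recursive evaluation order of the lifted elementary functions can be linearised so that every argument is available before use) only makes explicit what the paper assumes from Section \ref{section Griewank}.
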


\section{Comparison with Symbolic Differentiation---Some Thoughts on Complexity}\label{section comparison FAD with symbolic}

Before we move on to further descriptions of Forward AD and to a brief description of the Reverse Mode, we take a look at some example cases to demonstrate how Automatic Differentiation solves some complexity issues which appear in systems which use symbolic differentiation. 

Assume that we want to obtain the derivative of a composition $f$ of uni-variate elementary functions $\varphi_1,...,\varphi_n: \R \to \R$, that is,
\[f= \varphi_n \circ \cdots \circ \varphi_1,\]
at a certain value $c \in \R$.

A symbolic differentiation system will first use the chain rule to determine the derivative function $f'$, which is given by
\[f'(x) = \varphi'_n(\varphi_{n-1}(\cdots (\varphi_1(x)))) \cdot \varphi'_{n-1}(\varphi_{n-2}(\cdots (\varphi_1(x))))\cdot \ \cdots\ \cdot \varphi'_2(\varphi_1(x)) \cdot \varphi'_1(x),\]
for all $x \in \R$,
and then compute $f'(c)$ by substituting $x$ by $c$. That is, each factor is computed and the results are multiplied. Hence, the system computes the following values:
\begin{align*}
&\varphi'_1(c) \\ 
&\varphi_1(c), \ \varphi'_2(\varphi_1(c))\\
&\varphi_1(c), \ \varphi_2(\varphi_1(c)), \ \varphi'_3(\varphi_2(\varphi_1(c))) \\
&\varphi_1(c), \ \varphi_2(\varphi_1(c)), \ \varphi_3(\varphi_2(\varphi_1(c))), \ \varphi'_4(\varphi_3(\varphi_2(\varphi_1(c))))\\
&\vdots \\
&\varphi_1(c), \ \varphi_2(\varphi_1(c)), \ \varphi_3(\varphi_2(\varphi_1(c))), ... , \ \varphi'_n(\varphi_{n-1}(\cdots (\varphi_1(c))))
\end{align*}
As we see many expressions will be computed multiple times (\emph{loss of sharing}). If we ignore the time the system needs to determine $f'$, as well as the time for performing multiplications, and set the cost for the computation of each value of $\varphi_i, \varphi'_i$ as $1$, then the total cost of computing 
$f'(c)$ is 
\[1 + 2 + 3 +\cdots + n = \sum_{k=1}^n k = \frac{n(n+1)}{2} \in \mathcal{O}(n^2). \]

In comparison, a Forward Automatic Differentiation system will perform a computation of pairs starting with $\big( \varphi_1(c), \varphi'_1(c) \cdot 1 \big)$, where the $(i+1)$-st pair for $i \geq 1$ looks like
\[\big(\varphi_{i+1}(v_i), \ \varphi'_{i+1}(v_i) \cdot v'_i\big),\]
for some $v_i, v'_i \in \R$. See the computational graph in Figure \ref{first graph} in Subsection \ref{subsection basic idea} for the case $n=3$ (set $x'=1$).
If we again ignore costs for multiplications, the cost for evaluating each pair is $2$. Hence, the total costs of evaluating $f'(c)$ via Automatic Differentiation is $2n$. 

In \cite{Manzyuk-mfps2012} the example of a product of the form
\[f = \varphi_n * \cdots * \varphi_1 = * (\varphi_n, \ * (\varphi_{n-1}, *(\cdots (*(\varphi_2,\varphi_1)))))\]
is given. Here, the evaluation of the derivative of $f'(c)$ at some $c \in \R$ by a symbolic differentiation system will first use the product rule to compute $f'$ given by
\begin{align*}f'(x) = &\varphi'_n(x) \cdot (\varphi_{n-1}(x) \cdots \varphi_1(x)) + \varphi'_{n-1}(x) \cdot (\varphi_{n}(x) \cdot \varphi_{n-2}(x) \cdots \varphi_1(x))\\ 
&+ \cdots + \varphi'_1(x) \cdot (\varphi_{n}(x) \cdots \varphi_2(x)),
\end{align*}
for all $x \in \R$, and then again substitute $x$ by $c$. Again, many function values will be computed multiple times. Since we have $n$ functions in each summand and $n$ summands, ignoring cost for multiplications and addition, the computation of $f'(c)$ has a total cost of $n^2$.

In contrast, a Forward Automatic Differentiation system will compute pairs starting with 
\[\big(\varphi_1(c), \ \varphi'_1(c) \cdot 1\big), \ \big(\varphi_2(c), \ \varphi'_2(c) \cdot 1\big),\ \left(\varphi_1(c) * \varphi_2(c),\ \left(\begin{array}{c} \varphi_2(c) \\ \varphi_1(c) \end{array}\right)^T \cdot \left(\begin{array}{c} \varphi_1'(c) \\ \varphi'_2(c) \end{array}\right)\right)
\]
where the remaining pairs for $i \geq 2$ look like
\begin{align*}
&\big(\varphi_{i+1}(c), \ \varphi'_{i+1}(c) \cdot 1\big), \ \ 
&\left(\varphi_{i+1}(c) * v_i, \ \left(\begin{array}{c}v_i \\  \varphi_{i+1}(c)\end{array}\right)^T \cdot 
\left(\begin{array}{c}\varphi'_{i+1}(c) \\ v'_{i} \end{array}\right)\right)
\end{align*}
for some $v_i, v'_i \in \R$. Since the $3$rd, $5$th, $7$th etc. pairs are those which are created by lifting $*$ to the dual numbers, they contain only additions and multiplications of values which have already been computed. Hence, for simplicity, we may discard these pairs with regards to the costs of the evaluations of $f'(c)$. Thus, the total cost of computing $f'(c)$ is the cost of computing the $n$ pairs $\big(\varphi_{i+1}(c), \ \varphi'_{i+1}(c) \cdot 1\big)$ which is $2n$. 
\begin{figure}
\centering
\includegraphics[scale = 0.9]{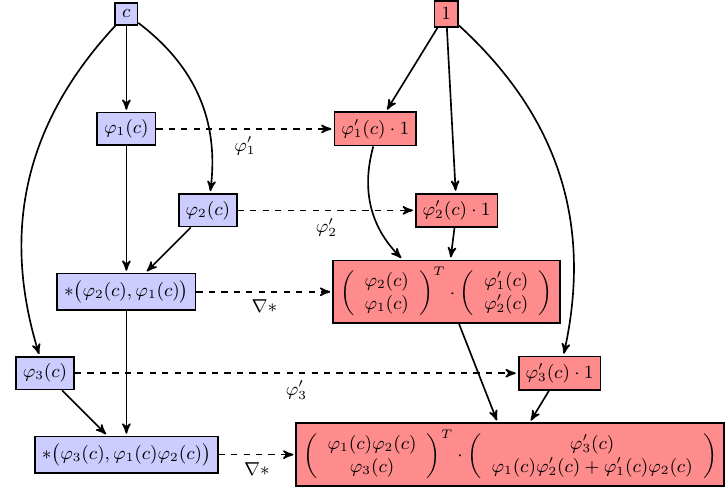}
\caption{Computational graph for the computation of $f'(c) = \frac{d}{dx} \left( \varphi_n * \cdots * \varphi_1 \right)(c)$ in the case $n=3$.}\label{graph for example product rules}
\end{figure}

A further advantage of Forward AD, at least in many implementation, is the efficient handling of common intermediate expressions feeding into several subsequent intermediates. Consider for this the case of a sum of the $n$ elementary uni-variate functions $\varphi_i$ each composed with another univariate elementary function $\psi: \R \to \R$:
\[f = (\varphi_1 \circ \psi) + \cdots + (\varphi_n \circ \psi)= + (\varphi_n \circ \psi, \ + (\varphi_{n-1} \circ \psi, 
+(\cdots (+(\varphi_2 \circ \psi,\varphi_1 \circ \psi)))))\]  
The derivative $f'$ is in this case obviously given by
\[f'(x) = \psi'(x) \cdot \varphi'_1(\psi(x)) + \cdots + \psi'(x) \cdot \varphi'_n(\psi(x)), \]
for all $x \in \R$. To determine $f'(c)$, a symbolic differentiation system will evaluate $\psi(c)$ in each summand, that is $n$ times  (and possibly, if the expression $\psi'(x)$ is not factored out, $\psi'(c)$ as well $n$-times). Hence, the computational cost of evaluating $f'(c)$ is at least $2n+1$ (if $\psi'(x)$ is factored out, $3n$ otherwise), where we again ignore costs for additions and multiplications and for determining the derivative function $f'$. If we want to obtain the value $f(c)$, too, the cost increases to $4n+1$ (or $6n$, respectively). 

In this particular case, some realisations of a Forward Automatic Differentiation system might evaluate $\psi'(c)$ $n$-times as well. However, in many implementations the value $\psi(c)$ will be assigned to a new variable $z$, such that
\begin{align*}
f(c) = \varphi_1(z) + \cdots + \varphi_n(z) \ \ \textrm{and} \ \ f'(c) = z' \cdot \varphi'_1(z) + \cdots + z' \cdot \varphi'_n(z),
\end{align*} 
where $z':=\psi'(c)$. The Forward AD system will then compute pairs starting with 
\begin{align*}
&\big( z, \ z' \cdot 1 \big), \ \ \big( \varphi_1(z),\ \varphi'_1(z) \cdot z' \big), \ \ \big( \varphi_2(z),\ \varphi'_2(z) \cdot z' \big),\\
&\big( \varphi_1(z) + \varphi_2(z),\ \varphi'_1(z) \cdot z' + \varphi'_2(z) \cdot z' \big) \ \ \textrm{etc.}
\end{align*}
Clearly, in this process the values (numbers) $\psi(c)=z$ and $\psi'(c)=z'$ are computed only once. If we again ignore costs for additions and multiplications (including the cost for computing pairs created by lifting $+$), the total cost of determining both $f(c)$ and $f'(c)$ is the cost of computing the $n+1$ pairs $\big( z, \ z' \cdot 1 \big)$,
$\big( \varphi_i(z),\ \varphi'_i(z) \cdot z' \big)$, $i=1,...,n$, which is $2n+2$.\footnote{Admittedly, in this particular case, if one wants to determine only the derivative $f'(c)$, the symbolic evaluation appears to be slightly faster than FAD. However, we have chosen this example mainly to demonstrate how the redundant computation of common sub-expressions can be avoided using Automatic Differentiation. Note further that we have disregarded the cost for determining the derivative function $f'$ in a symbolic differentiation in our considerations.} 
\begin{figure}
\centering
\includegraphics{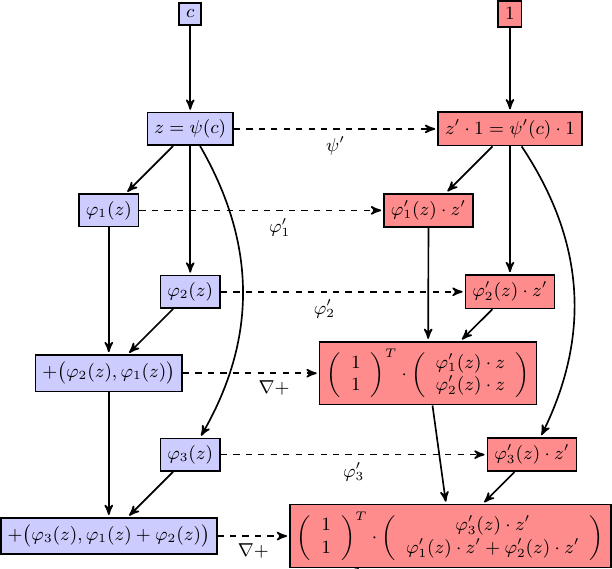}
\caption{Computational graph for the computation of $f(c) = \varphi_1(z) + \cdots + \varphi_n(z)$ and
$f'(c) = z' \cdot \varphi'_1(z) + \cdots + z' \cdot \varphi'_n(z)$ in the case $n=3$, where $z:=\psi(c)$ and $z':=\psi'(c)$.}\label{graph for example sum with common subexpressions}
\end{figure}
Note that the substitution $z:=\psi(x)$ in a symbolic differentiation system would not have the same effect of avoiding redundant calculations. Since $\psi(x)$ is not a number, but an algebraic expression, to obtain $f'(c)$ the variable $x$ would still have to be substituted by $c$ in each instance of $z$ in
$f'(x) = z' \cdot \varphi'_1(z) + \cdots + z' \cdot \varphi'_n(z)$.

Of course, the situation is more difficult when the function $f$ is more complicated or when multi-variate elementary functions other than 
$+$ or $*$ are involved. However, we hope to have demonstrated that, in general, Forward Automatic Differentiation avoids redundant computations of common sub-expressions and does not suffer from the same complexity issues as symbolic computation.

\section{Forward AD and Taylor Series expansion }\label{section Dual and Taylor}

In the literature (see, for example, \cite[Section 2]{pearlmutter-siskind-popl-2007}), definition \eqref{definition extension dual numbers} is sometimes described as being obtained by evaluating the Taylor series expansion of $\widehat{h}$ about $(x_1+x_1' \varepsilon, ...,x_{n}+x'_{n}\varepsilon )$. 

To understand this argument, recall that the Taylor series of an infinitely many times differentiable multivariate function $h: X \to \R$ on an open set $X \subset \R^{n}$ about some point $\mathbf{c}=(c_1,...,c_{n})\in X$ is given by
\begin{align*}
T(h;\mathbf{c})(\mathbf{x}) = \sum_{k_1 + \cdots + k_{n} =0}^{\infty} \frac{(x_1-c_1)^{k_1} \cdots (x_{n}-c_{n})^{k_{n}}}{k_1! \cdots k_{n}!}
\frac{\partial^{k_1+\cdots + k_{n}}h}{\partial x_1^{k_1} \cdots \partial x_{n}^{k_{n}}}(\mathbf{c}),
\end{align*}
for all $\mathbf{x}=(x_1,...,x_{n})\in X$.

Let now $\tilde{h}: \D^{n} \supset X \times \R^{n} \to \D$ be an extension of $h$ to the dual numbers (that is $\tilde{h}|_{X}=h$). We define the Taylor series of $\tilde{h}$ about some vector of dual numbers
$(\mathbf{c},\mathbf{\overset{\rightharpoonup}{c}}):=(c_1 + c'_1 \varepsilon,...,c_{n}+c'_{n} \varepsilon)\in X \times \R^{n}$ analogously to the real case. That is,
\begin{align*}
T(\tilde{h};(\mathbf{c},\mathbf{\overset{\rightharpoonup}{c}}))((\mathbf{x},\mathbf{\overset{\rightharpoonup}{x}})&)\\[1ex] 
= \sum_{k_1 + \cdots + k_{n} =0}^{\infty}\Bigg(&\frac{(x_1 - c_1 + (x'_1 - c'_1) \varepsilon)^{k_1} 
\cdots (x_{n} -c_{n} +( x'_{n} - c'_{n}) \varepsilon)^{k_{n}}}{k_1! \cdots k_{n}!}\\
&\cdot \frac{\partial^{k_1+\cdots + k_{n}}\tilde{h}}{\partial x_1^{k_1} \cdots \partial x_{n}^{k_{n}}}((\mathbf{c},\mathbf{\overset{\rightharpoonup}{c}}))\Bigg),
\end{align*}
for all $(\mathbf{x},\mathbf{\overset{\rightharpoonup}{x}}):=(x_1+x'_1 \varepsilon,...,x_{n}+ x'_{n} \varepsilon)\in X \times \R^{n}$.

Trivially, this series converges for $(\mathbf{x},\mathbf{\overset{\rightharpoonup}{x}}) = (\mathbf{c},\mathbf{\overset{\rightharpoonup}{c}})$. Further, due to
$\varepsilon^2 = 0$, the Taylor series about any $(\mathbf{x},\mathbf{0})=(x_1+0\varepsilon,...,x_{n}+0\varepsilon)\in X \times \R^{n}$ converges for the arguments $(\mathbf{x},\mathbf{\overset{\rightharpoonup}{x}}) =(x_1 + x'_1 \varepsilon ,...,x_{n}+x'_{n}\varepsilon )$, for all 
$\mathbf{\overset{\rightharpoonup}{x}} \in \R^{n}$. We have, identifying $\mathbf{x}$ with $(\mathbf{x},\mathbf{0})$, 
\begin{align}\label{dual numbers Taylor series expansion}\notag
T(\tilde{h};\mathbf{x})(\mathbf{x},\mathbf{\overset{\rightharpoonup}{x}}) 
&= \sum_{k_1 + \cdots + k_{n} =0}^{\infty} \frac{(x'_1\varepsilon)^{k_1} 
\cdots (x'_{n}\varepsilon)^{k_{n}}}{k_1! \cdots k_{n}!}
\frac{\partial^{k_1+\cdots + k_{n}}}{\partial x_1^{k_1} \cdots \partial x_{n}^{k_{n}}}\tilde{h}(\mathbf{x})\\[1ex] 
&=\sum_{k_1 + \cdots + k_{n} =0}^{1}\frac{(x'_1\varepsilon)^{k_1} 
\cdots (x'_{n}\varepsilon)^{k_{n}}}{k_1! \cdots k_{n}!}
\frac{\partial^{k_1+\cdots + k_{n}}}{\partial x_1^{k_1} \cdots \partial x_{n}^{k_{n}}}\tilde{h}(\mathbf{x}) \\[1ex]\notag
&=\tilde{h}(\mathbf{x}) + \sum_{j=1}^{n}\frac{\partial}{\partial x_j}\tilde{h}(\mathbf{x}) \cdot x'_j\varepsilon\\[1ex] \notag
&=\tilde{h}(\mathbf{x}) + \left(\nabla \tilde{h}(\mathbf{x}) \cdot \left(\begin{array}{c} x'_1 \\ \vdots \\ x'_{n} \end{array} \right) \right) \varepsilon
=h(\mathbf{x}) + (\nabla h(\mathbf{x}) \cdot \mathbf{\overset{\rightharpoonup}{x}}) \varepsilon,   
\end{align}
where $\nabla \tilde{h}(\mathbf{x}):=\left(\frac{\partial \tilde{h}}{\partial x_1}(\mathbf{x}) \cdots \frac{\partial \tilde{h}}{\partial x_n}(\mathbf{x})\right) = \nabla h(\mathbf{x})$.

As we see, the right-hand side of the last equation is equal to\\ $\widehat{h}(x_1+x'_1\varepsilon,...,x_{n}+x'_{n}\varepsilon)$ in definition \eqref{definition extension dual numbers}. Hence, if we choose $\tilde{h}$ as $\widehat{h}$, we obtain 
\begin{align}\label{Taylor series equal extension to dual numbers} \widehat{h}(x_1+x'_1\varepsilon,...,x_{n}+x'_{n}\varepsilon) = T(\widehat{h};(x_1,...,x_{n}))(x_1+x'_1\varepsilon,...,x_{n}+x'_{n}\varepsilon). 
\end{align}
That is: 

\begin{proposition}
The extension of an infinitely many times differentiable $h: X \to \R$, on open $X \subset \R^{n}$, to a set $X \times \R^n \subset \D^n$ as defined in \eqref{definition extension dual numbers}, is the (unique) function $\widehat{h}$, with the property that the images of any $(x_1+x'_1\varepsilon,...,x_{n}+x'_{n}\varepsilon) \in X \times \R^n$ under $\widehat{h}$ and $T(\widehat{h};(x_1,...,x_{n}))$ are equal.
\end{proposition}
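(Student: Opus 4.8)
The plan is to show two things: that $\widehat{h}$ \emph{has} the stated equality property, and that it is the \emph{unique} such function. The first part is essentially already done in the discussion preceding the proposition, so the proof will mostly consist of pointing to equation \eqref{Taylor series equal extension to dual numbers}, which was derived by the computation in \eqref{dual numbers Taylor series expansion} upon taking $\tilde h = \widehat h$. The key observation underlying that computation is that, because $\varepsilon^2 = 0$, every monomial $(x'_1 \varepsilon)^{k_1} \cdots (x'_n \varepsilon)^{k_n}$ with $k_1 + \cdots + k_n \geq 2$ vanishes, so the infinite Taylor series collapses to its terms of total degree $0$ and $1$, giving exactly the right-hand side of \eqref{definition extension dual numbers}. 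I would restate this collapse cleanly as the first step.

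For the uniqueness part, suppose $g : X \times \R^n \to \D$ is another extension of $h$ with the property that $g(\mathbf{x} + \mathbf{\overset{\rightharpoonup}{x}}\varepsilon) = T(g;\mathbf{x})(\mathbf{x}+\mathbf{\overset{\rightharpoonup}{x}}\varepsilon)$ for all $(\mathbf{x},\mathbf{\overset{\rightharpoonup}{x}}) \in X \times \R^n$. The idea is that the Taylor-series identity forces $g$ to have a very rigid form. Writing $g(\mathbf{x}+\mathbf{\overset{\rightharpoonup}{x}}\varepsilon) = g_1(\mathbf{x},\mathbf{\overset{\rightharpoonup}{x}}) + g_2(\mathbf{x},\mathbf{\overset{\rightharpoonup}{x}})\varepsilon$ with $g_1, g_2$ real-valued, the constraint $g|_X = h$ means $g_1(\mathbf{x},\mathbf{0}) = h(\mathbf{x})$ and $g_2(\mathbf{x},\mathbf{0}) = 0$. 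Running the same $\varepsilon^2 = 0$ collapse of the Taylor series of $g$ about $(\mathbf{x},\mathbf{0})$ — evaluated at $(\mathbf{x},\mathbf{\overset{\rightharpoonup}{x}})$ — yields $g(\mathbf{x}+\mathbf{\overset{\rightharpoonup}{x}}\varepsilon) = g(\mathbf{x}) + \sum_{j=1}^n \frac{\partial}{\partial x_j} g(\mathbf{x})\, x'_j \varepsilon$, where the partial derivatives are taken with respect to the first-slot (primal) variables at the point $(\mathbf{x},\mathbf{0})$. Since $g(\mathbf{x}) = h(\mathbf{x})$ and $\frac{\partial}{\partial x_j} g(\mathbf{x}) = \frac{\partial}{\partial x_j} h(\mathbf{x})$ (the dual part of $g$ vanishes identically on $X$, so differentiating it in a primal direction gives $0$ in the dual slot, while the primal part agrees with $h$), we get $g(\mathbf{x}+\mathbf{\overset{\rightharpoonup}{x}}\varepsilon) = h(\mathbf{x}) + (\nabla h(\mathbf{x}) \cdot \mathbf{\overset{\rightharpoonup}{x}})\varepsilon = \widehat h(\mathbf{x}+\mathbf{\overset{\rightharpoonup}{x}}\varepsilon)$, so $g = \widehat h$.

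The main obstacle — and the point that needs care rather than cleverness — is making precise what ``the Taylor series of an extension $g$'' means and why its coefficients are governed only by the restriction of $g$ to $X$ (equivalently, why the derivatives $\frac{\partial^{k_1 + \cdots + k_n}}{\partial x_1^{k_1}\cdots \partial x_n^{k_n}} g$ appearing in the series, evaluated at $(\mathbf{x},\mathbf{0})$, are determined by $h$ once we only ever differentiate in the primal directions and only ever keep the surviving degree-$0$ and degree-$1$ terms). One must be slightly careful that differentiating the $\D$-valued function $g$ componentwise and the $\varepsilon^2 = 0$ truncation interact correctly; the cleanest route is to note that in \eqref{dual numbers Taylor series expansion} only first-order primal derivatives of $g$ at points of the form $(\mathbf{x},\mathbf{0})$ enter, and on the locus $\mathbf{\overset{\rightharpoonup}{x}} = \mathbf{0}$ we have $g = h$ (as a $\D$-valued function with zero dual part), so those derivatives are exactly $\frac{\partial h}{\partial x_j}(\mathbf{x})$. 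Once this is spelled out, uniqueness is immediate. I would also remark, for completeness, that the hypothesis of infinite differentiability is what lets us write down the full Taylor series in the first place, even though only finitely many terms ultimately matter.
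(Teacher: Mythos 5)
Your proposal is correct and follows essentially the same route as the paper: the proposition is stated there as a summary of the computation in \eqref{dual numbers Taylor series expansion}, which is carried out for an arbitrary extension $\tilde{h}$ and hence already yields both the equality (take $\tilde{h}=\widehat{h}$, giving \eqref{Taylor series equal extension to dual numbers}) and the uniqueness (any extension $g$ satisfying the Taylor identity must equal $h(\mathbf{x})+(\nabla h(\mathbf{x})\cdot\mathbf{\overset{\rightharpoonup}{x}})\varepsilon$). Your only addition is to spell out explicitly why the first-order primal derivatives of $g$ at $(\mathbf{x},\mathbf{0})$ are forced to agree with those of $h$, a point the paper compresses into the remark $\nabla\tilde{h}(\mathbf{x})=\nabla h(\mathbf{x})$.
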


It is clear that the statement remains true if we replace `infinitely many times differentiable' by `differentiable' and $T(\widehat{h};(x_1,...,x_{n}))$ by the first-degree Taylor polynomial $T_1(\widehat{h};(x_1,...,x_{n}))$.\footnote{Note that we mean here by first-degree Taylor polynomial simply the polynomial in \eqref{dual numbers Taylor series expansion}. That is, we make no statement about the existence of a remainder term, for which mere differentiability of $h$ would not be sufficient.}

Since we identify $X$ with its natural embedding into $\D^{n}$, we can replace $\tilde{h}(\mathbf{x})$ by $h(\mathbf{x})$ in the right-hand side of \eqref{dual numbers Taylor series expansion}. It is custom to do this in the left-hand side of \eqref{dual numbers Taylor series expansion} as well. That is, one 
usually writes $T({h};(x_1,...,x_{n}))$ instead of $T(\tilde{h};(x_1,...,x_{n}))$ or $T(\widehat{h};(x_1,...,x_{n}))$.

By \eqref{Taylor series equal extension to dual numbers}, it is obvious that one can describe the process of determining the directional derivatives 
$\nabla f_j(\mathbf{c}) \cdot \mathbf{\overset{\rightharpoonup}{x}}$ of each $f_j$ in terms of Taylor series expansion, if $f_j$ is infinitely many times differentiable, or its first-degree Taylor polynomial, otherwise. 

Taylor series expansion or Taylor polynomials can also be used to compute higher-order partial derivatives which we discuss briefly in Section \ref{Higher-Order Partial} (see also the work in \cite[Chapter 13]{Griewank2008}).

\section{Forward AD, Differential Geometry and Category Theory}\label{section diffgeom}

In recent literature (see \cite{Manzyuk-mfps2012}) the extension of differentiable functions $h: X \to \R$ on open $X \subset \R^{n}$ to a function 
$\widehat{h}: \D^{n} \supset X \times \R^{n} \to \D$ is described in terms of the \emph{push-forward} operator known from Differential Geometry. We shortly summarize the discussion provided in \cite{Manzyuk-mfps2012}.

Let $M,N$ be differentiable manifolds, $TM, TN$ their tangent bundles and let $h: M \to N$ be a differentiable function. The push-forward (or \emph{differential}) $T(h)$ of $h$ can be defined\footnote{The definition we are using here is the same as in \cite{Manzyuk-mfps2012}. Some authors define $T(h)$ via $T(h)(\mathbf{x},\mathbf{\overset{\rightharpoonup}{x}}) := d_{\mathbf{x}}h(\mathbf{\overset{\rightharpoonup}{x}})$.} as 
\[T(h): TM \to TN \ \ \ \textrm{with} \ \ T(h)(\mathbf{x},\mathbf{\overset{\rightharpoonup}{x}}) = (h(\mathbf{x}),d_{\mathbf{x}}h(\mathbf{\overset{\rightharpoonup}{x}})),\]
where $d_{\mathbf{x}}h(\mathbf{\overset{\rightharpoonup}{x}})$ is the \emph{push-forward (or differential) of $h$ at $\mathbf{x}$} applied to 
$\mathbf{\overset{\rightharpoonup}{x}}$.

If now $f: X \to \R^m$ on open $X \subset \R^{n}$ is a differentiable function, this reads  
\[T(f): X \times \R^{n} \to \R^m \times \R^m \ \ \ \textrm{with} \ \ T(f)(\mathbf{x},\mathbf{\overset{\rightharpoonup}{x}}) = \left(f(\mathbf{x}),\ J_f(\mathbf{x}) \cdot \mathbf{\overset{\rightharpoonup}{x}} \right)
.\] 
Considering $X \times \R^{n}$ as a subset of $\D^n$ and identifying $\R^m \times \R^m$ with $\D^m$, in light of \eqref{extension dual numbers mulit-dimensional}, this means nothing else than 
\[T(f)= \widehat{f}.\] 
Furthermore, in the special case of a real-valued and infinitely many times differentiable function $f_j: X \to \R$ on open $X \subset \R^{n}$ we also have, by equation \eqref{Taylor series equal extension to dual numbers},
\[T(f_j)(\mathbf{x},\mathbf{\overset{\rightharpoonup}{x}}) = T(f_j;\mathbf{x})(\mathbf{x},\mathbf{\overset{\rightharpoonup}{x}}), \ \ \forall (\mathbf{x},\mathbf{\overset{\rightharpoonup}{x}}) \in X \times \R^{n},\] which justifies using the letter $T$ for both, the push-forward and the Taylor-series of $f_j$ in this setting. 

It is well-known that $T(id_M) = id_{TM}$ and that
\[T(h_2 \circ h_1) = T(h_2) \circ T(h_1),\]
for all $h_1: M \to N$ and $h_2: N \to L$, for differentiable manifolds $M,N,L$. That is, the mapping given by
\begin{align*}
&M \mapsto TM \\
&h \mapsto T(h)
\end{align*} 
is a functor from the category of differentiable manifolds to the category of vector bundles (see, for example, \cite[III, \S 2]{Lang}). Furthermore, since $T\R = \R^2$, one can even consider the algebra of dual number $\D$ as the image of $\R$ under $T$, equipped with the push-forwards of addition and multiplication. I.e.,
\[(\D,+,\cdot) = \left(T\R,T(+),T(*)\right).\]
Extending this to higher dimensions, the lifting of a differentiable function\\ 
$f:X \to \R^m$ on open $X \subset \R^{n}$ to a function $\widehat{f}$ on a set $X \times \R^{n} \subset \D^{n}$ may be considered as the application of the functor $T$ to $X$, $\R^m$ and $f$. In other words,
\begin{align*}
 \widehat{f}: X \times \R^{n} \to \D^m \ \ \ = \ \ \ T(f):TX \to T\R^m \ \ \ = \ \ \ T\left(  f: X \to \R^m \right).
\end{align*}
In summary, the Forward Mode of AD may also be studied from viewpoints of Differential Geometry and Category Theory. This fact may be used to generalise the concept of Forward AD to functions operating on differentiable manifolds other than open subsets of $\R^n$.

\section{Higher-Order Partial Differentiation}\label{Higher-Order Partial}

\subsection{Truncated Polynomial Algebras}\label{polynomial algebra}

The computation of higher-order partial derivatives of a sufficiently often and automatically differentiable function $f_j: U \to \R$ on open $U \subset \R^{n}$ can, for example, be achieved through the extension of $f_j$ to a function defined on a truncated polynomial algebra.\footnote{We denote the domain of definition by $U$ here to avoid confusion with indeterminates which we denote by $X$ or $X_i$.} This approach has, for instance, been described by Berz\footnote{Berz actually uses a slightly different but equivalent approach than the one presented here (see the end of this subsection).} in \cite{Berz} with further elaborations to be found in \cite{Garczynski} and the work in \cite{KarczmarczukI} and \cite{pearlmutter-siskind-popl-2007} extending the idea (for the two latter, see the following subsection). 

Indeed, the extension of $f_j$ to a function on dual numbers as given in definition \eqref{definition extension dual numbers} can already be considered in the context of truncated polynomials, since $\D \cong \R[X]/(X^2)$. 

Let now $\N \ni n, N \geq 1$ and consider the algebra $\R[X_1,...,X_n]/I_{N}$, where 
\[I_{N}:=\left(\left\{X_1^{k_1} \cdots X_n^{k_n} \ | \ (k_1,...,k_n) \in \N^n \ \textrm{with} \ k_1 + \cdots + k_n > N\right\}\right)\]
is the ideal generated by all monomials of order $N+1$. Then 
\[\R[X_1,...,X_n]/I_{N} \cong \left\{\sum_{k_1 + \cdots + k_{n} =0}^{N}x_{(k_1,...,k_n)}X_1^{k_1} \cdots X_n^{k_n} \ | \ x_{(k_1,...,k_n)} \in \R\right\}\]
consists of all polynomials\footnote{Which, of course, can be identified with tuples consisting of their coefficients.} in $X_1,...,X_n$ of degree $\leq N$.

Denote 
\[\f_i := x_{i,(0,...,0)} + \sum_{k_1 + \cdots + k_{n} =1}^{N}x_{i,(k_1,...,k_n)}X_1^{k_1} \cdots X_n^{k_n} \]
and, for simplicity, identify $x_i:=x_{i,(0,...,0)}$. 
Let further $U \subset \R^n$ be open and define
\begin{align*}
&\left(\R[X_1,...,X_n]/I_{N}\right)^n_U\\
&:=\left\{(\f_1,...,\f_n) \in \left(\R[X_1,...,X_n]/I_{N}\right)^n \ |\ (x_1,...,x_n) \in U \right\}.
\end{align*}
That is, $\left(\R[X_1,...,X_n]/I_{N}\right)^n_U$ consists of vectors of polynomials in\\
$\R[X_1,...,X_n]/I_{N}$ with the property that the vector consisting of the trailing coefficients lies in $U$.

We now define an extension of an $N$-times differentiable, real-valued function
$h: U \to \R$ to a function
$\widehat{\widehat{h}}: \left(\R[X_1,...,X_n]/I_{N}\right)^n_U \to
\R[X_1,...,X_n]/I_{N}$ via
\begin{align}\label{definition truncated polynomials}\notag
&\widehat{\widehat{h}}(\f_1,...,\f_n) \\
&:= \sum_{k_1 + \cdots + k_{n} =0}^{N} \frac{\left(\f_1-x_{1}\right)^{k_1} 
\cdots \left(\f_n-x_{n}\right)^{k_{n}}}{k_1! \cdots k_{n}!}
\frac{\partial^{k_1+\cdots + k_{n}}h}{\partial x_1^{k_1} \cdots \partial x_{n}^{k_{n}}}(x_{1},...,x_{n})
\end{align}
In other words, $\widehat{\widehat{h}}$ is the unique function with the property that the images of any $(\f_1,...,\f_n)$ under 
$\widehat{\widehat{h}}$ and its $N$-th degree Taylor polynomial \\
$T_N(\widehat{\widehat{{h}}};(x_1,...,x_n))$ about $(x_1,...,x_n)$ are equal.

The reason for this definition becomes apparent when we apply $\widehat{\widehat{h}}$ to a vector of polynomials of the form $(x_1+X_1, ...,x_{n}+X_n )$. Then
\begin{align*}
&\widehat{\widehat{h}}(x_1+X_1, ...,x_{n}+X_n )\\ \notag
&= h(\mathbf{x}) 
+ \sum_{k_1 + \cdots + k_{n} =1}^{N}\frac{1}{k_1!\cdots k_n!}X_1^{k_1} \cdots X_n^{k_n}
\frac{\partial^{k_1+\cdots + k_{n}}h}{\partial x_1^{k_1} \cdots \partial x_{n}^{k_{n}}}(\mathbf{x}),
\end{align*}
for $\mathbf{x}=(x_1,...,x_n)$. 

One can now compute partial derivatives of order $N$ of a sufficiently often and automatically differentiable function $f_j: U \to \R$ by implementing \eqref{definition truncated polynomials} for all elementary functions $\varphi_i$. This leads to the extension of $f_j$ to 
$\widehat{\widehat{f_j}}$ and one obtains a partial derivative $\frac{\partial^{k_1+\cdots + k_{n}}f_j}{\partial x_1^{k_1} \cdots \partial x_{n}^{k_{n}}}(\mathbf{c})$ at a given $\mathbf{c}=(c_1,...,c_n) \in U$ as the ($k_1! \cdots k_n!$)-th multiple of the coefficient of $X_1^{k_1} \cdots X_n^{k_n}$ in\\
$\widehat{\widehat{f_j}}(c_1+X_1, ...,c_{n}+X_n )$. 

Of course, to show that this method actually works, one needs to prove an analogue of Proposition \ref{proposition dual numbers quasi-compositions}. However, we will omit the proof here.

Obviously, this method requires the computation of the $N$-th Taylor polynomial, or, equivalently, of the Taylor coefficients up to degree $N$, of each elementary function $\varphi_i$ appearing in $f_j$. The complexity of this problem is discussed in detail in \cite[pages 306--308]{Griewank2008}: If no restrictions on an elementary function $\varphi_i$ is given, even in the uni-variate case, order-$N^3$ arithmetic operations may be required. However, in practice all elementary functions $\varphi_i$ are solutions of linear ODEs which reduces the computational costs of their Taylor coefficients to $k N^2+\mathcal{O}(N)$ for $k \in \{1,2,3\}$ (see \cite[(13.7) and Proposition 13.1]{Griewank2008}).

We further remark that Berz in \cite{Berz}, instead of $\R[X_1,...,X_n]/I_{N}$, actually uses the algebra 
\[\left\{\sum_{k_1 + \cdots + k_{n} =0}^{N}x_{(k_1,...,k_n)}\frac{X_1^{k_1}}{k_1!} \cdots \frac{X_n^{k_n}}{k_n!} \ | \ x_{(k_1,...,k_n)} \in \R\right\},\] where multiplication is defined via
\begin{align}\label{multiplication Berz}
&\left(\sum_{r_1 + \cdots + r_{n} =0}^{N}x_{(r_1,...,r_n)}\frac{X_1^{r_1}}{r_1!} \cdots \frac{X_n^{r_n}}{r_n!} \right)
\cdot \left(\sum_{s_1 + \cdots + s_{n} =0}^{N}y_{(s_1,...,s_n)}\frac{X_1^{s_1}}{s_1!} \cdots \frac{X_n^{s_n}}{s_n!} \right)\\
&= \sum_{k_1 + \cdots + k_{n} =0}^{N} (k_1 + \cdots + k_n)! \cdot z_{(k_1,...,k_n)}\frac{X_1^{k_1}}{k_1!} \cdots \frac{X_n^{k_n}}{k_n!}. 
\end{align}
for $z_{(k_1,...,k_n)}:=\left(\sum_{(r_1,...,r_n)+(s_1,...,s_n)=(k_1,...,k_n)}x_{(r_1,...,r_n)}y_{(s_1,...,s_n)}\right)$. This obviously makes no real difference to the theory, the main advantage is that after lifting a function $h$ to this algebra, one can extract partial derivatives directly, without the need to multiply with $k_1! \cdots k_n!$.


\subsection{Differential Algebra and Lazy Evaluation}

Differential algebra is an area which has originally been developed to provide algebraic tools for the study of differential equations (see, for example, the original work by  Ritt \cite{Ritt} or the introductory article \cite{HuLu}). In the context of Automatic Differentiation it was utilized in \cite{KarczmarczukI}, \cite{KarczmarczukII}. 

A \emph{differential algebra} is an algebra $\mathcal{A}$ with a mapping $\delta: \A \to \A$ called a \emph{derivation}, which satisfies
\begin{align*}
\delta(a+b) = \delta(a) + \delta(b) \ \ \ \textrm{and} \ \ \ \delta(a \cdot b) = \delta(a) \cdot b + a \cdot \delta(b),
\end{align*}
for all $a,b \in \A$. If $\A$ is a field, it is called a \emph{differential field}. Examples for differential fields or algebras are the 
set of (real or complex) rational functions with any partial differential operator or polynomial algebras $\R[X_1,...,X_n]$ with a formal partial derivative.
Truncated polynomial algebras as described in the previous section can be made into differential algebras as well, however, as Garczynski in \cite{Garczynski} points out, a formal (partial) derivative is not a derivation in that case. (For instance, the mapping 
$D\cdot X: \R[X]/(X^2) \to \R[X]/(X^2)$ with $(D\cdot X)(x+x'X) = x'X$ is a derivation, but the formal derivative $D$ with $D(x+x'X) = x'$ is not.)

Karczmarczuk describes now in \cite{KarczmarczukI} a system in which, through a \emph{lazy evaluation}, to each object $a \in \A$ in a differential field $\A$, the sequence 
\[\left(\delta^n(a)\right)_{n \in \N} = (a, \delta(a), \delta^2(a), \delta^3(a),...) \]
is assigned. In the case of an infinitely many times differentiable univariate function $h:J \to \R$, on open $J \subset \R$, and the differential operator as derivation, this obviously gives $(h, h', h'', h''',...)$.
 
The set $\{\left(\delta^n (a)\right)_{n \in \N} \ | \ a \in \A\}$ now forms a differential algebra itself, where addition is defined entry-wise, multiplication is given by
\begin{align}\label{eq. multiplication diff-algebra} \notag
&\left(\delta^n(a)\right)_{n \in \N} \cdot \left(\delta^n(b)\right)_{n \in \N}
:= \left(\delta^n(a\cdot b)\right)_{n \in \N}\\[1ex] \notag 
&=\left(a \cdot b, a \cdot \delta(b)+\delta(a) \cdot b, a\cdot \delta^2(b) + 2 ( \delta(a) \cdot \delta(b) ) + \delta^2(a) \cdot b,\right.\\ \notag
&\left. \ \ \ \ \ a \cdot \delta^3(b) + 3 (\delta^2(a) \cdot \delta(b)) + 3 ( \delta(a) \cdot \delta^2(b) ) + \delta^3(a) \cdot b,...\right)
\\[1ex]
&= \left( \sum_{k_a+k_b=n}\frac{n!}{k_a!k_b!}\delta^{k_a}(a)\cdot\delta^{k_b}(b) \right)_{n \in \N}
\end{align}
and the derivation, denoted by $df$, is the right-shift operator, given by
\[df(a, \delta(a), \delta^2(a), \delta^3(a),...) = (\delta(a), \delta^2(a), \delta^3(a), \delta^4(a),...),\]
for all $a,b \in \A$. (These definition are given recursively in the original work; for implementation details, see \cite[Subsections 3.2--3.3]{KarczmarczukI}.)

To utilize these ideas for Automatic Differentiation, the assignment\\
$\varphi_i \mapsto \left(\delta^n(\varphi_i)\right)_{n \in \N}$ is implemented for all uni-variate infinitely many times 
differentiable elementary functions $\varphi_i: J_i \to \R$, defined on open $J_i \subset \R$, and the elementary functions $+$, $*$ and $/$ are replaced by addition, multiplication and division\footnote{We omit the description of how division on $\{\left(\delta^n (a)\right)_{n \in \N} \ | \ a \in \A\}$ is defined here, which can be found in the original publication.} on the $\left(\delta^n(\varphi_i)\right)_{n \in \N}$. This then generates for any univariate automatically differentiable function $f: J \to \R$, on open $J \subset \R$, which is constructable by the $\varphi_i$, $+$, $*$ and $/$, the sequence $(f, f', f'', f''',...)$. The $N$-the derivative of $f$ is then the first entry (which is distinguished in the implementation in \cite{KarczmarczukI}) in $df^N(f, f', f'', f''',...)$.

However, since a differential algebra/field is an abstract concept, this approach can, in principle, be applied to other objects than differentiable functions

We only remark that Karczmarczuk briefly describes in \cite{KarczmarczukIII} a generalisation to the multi-variate case. Kalman in \cite{kalman-2002a} also constructs a system which appears to be similar. 

Comparing \eqref{eq. multiplication diff-algebra} with \eqref{multiplication Berz} shows that the multiplication on\\ 
$\{\left(\delta^n (a)\right)_{n \in \N} \ | \ a \in \A\}$ is identical to the multiplication on the algebra used by Berz. Hence, one can express the described system, at least in the case of $a=h$ being a differentiable function, in terms of polynomial algebras. That is, consider the truncated algebra 
$\left\{\sum_{k =0}^{N}x_k\frac{X^{k}}{k!}  \ | \ x_{k} \in \R\right\}$ with multiplication as in \eqref{multiplication Berz}
and define an extension of a differentiable $h: J \to \R$ to a mapping 
\[\widehat{\tilde{h}}: \left\{\f=x + \sum_{k = 1}^N x_k \frac{X^{k}}{k!}  \ | \ x \in J, x_k \in \R\right\} \to \left\{\sum_{k =0}^{N}x_k\frac{X^{k}}{k!}  \ | \ x_{k} \in \R\right\} \] via
\begin{align*}
\widehat{\tilde{h}}(\f):= \sum_{k =0}^{N} \left( \f -x \right)^{k}\frac{1}{k!}\frac{d^k h}{{dx}^k}(x), 
\end{align*}
for all $\f \in \left\{\f=x + \sum_{k = 1}^N x_k \frac{X^{k}}{k!}  \ | \ x \in J, x_k \in \R\right\}$.
Then, for all $x \in J$,
\begin{align*}
\widehat{\tilde{h}}(x+X) = h(x) + \sum_{k=1}^{N} \frac{X^{k}}{k!} \frac{d^k h}{{dx}^k}(x),
\end{align*}
which corresponds to the tuple $(h(x),h'(x),h''(x),...,h^{(N)}(x))$. The lazy evaluation technique in \cite{KarczmarczukI} increases the degree of truncation $N$ successively \emph{ad infinitum} to compute any entry of the sequence $(h(x),h'(x),h''(x),h'''(x),...)$ for any $x \in J$.

Similarly, Pearlmutter and Siskind describe in \cite{pearlmutter-siskind-popl-2007} the lifting of a multi-variate function $h:U \to \R$ to a function on $\left(\R[X_1,...,X_n]/I_{N}\right)^n_U$ as defined in \eqref{definition truncated polynomials}, and then, through a lazy evaluation, increase the degree of truncation $N$ successively.\footnote{Pearlmutter and Siskind actually seem to use the ideal 
$(X_1^{N+1},...,X_n^{N+1})$ instead of $I_N$, which makes no real difference.} This computes any entry of the sequence\\ 
$\left(\frac{\partial^{k_1+\cdots + k_{n}}h}{\partial x_1^{k_1} \cdots \partial x_{n}^{k_{n}}}(\mathbf{x})\right)_{(k_1,...,k_n) \in \N^n}$ (given in some order), for any vector $\mathbf{x} \in U$.

\section{The Reverse Mode of AD}\label{section Reverse}

Let, as before, $f:X \to \R^m$ on open $X \subset \R^n$ be automatically differentiable. 
As already mentioned, the Reverse Mode of Automatic Differentiation evaluates products of the Jacobian of $f$ with row vectors. That is, it computes
\[ \mathbf{\overset{\leftharpoonup}{y}}  \cdot J_f(\mathbf{c}), \ \ \ \  \textrm{for fixed} \ \mathbf{c}\in X \ \textrm{and} \ \mathbf{\overset{\leftharpoonup}{y}} \in \R^{1 \times m}.\] 
To our knowledge, there exists currently no method to achieve this computation, which resembles Forward AD using dual numbers. Instead, an elementary approach, similar to the Forward AD approach in Section \ref{section Griewank} will have to suffice. The Reverse Mode is, for example, described in \cite{Griewank2003AMV}, \cite{Griewank2008} and \cite{PearlmutterSiskind2008a}. We follow mainly the discussion in \cite{Griewank2003AMV}.

Express again $f$ as the composition $P_Y \circ \Phi_{\mu} \circ \cdots \circ \Phi_1 \circ P_X$,
with $P_X$, $P_Y$ and the $\Phi_i$ as in Section \ref{section Griewank}. The computation of $\mathbf{\overset{\leftharpoonup}{y}}  \cdot J_f(\mathbf{c})$ is, by the chain rule, the evaluation of the product
\begin{align}\label{matrix-vector product Reverse Griewank}\notag
\mathbf{\overset{\leftharpoonup}{y}} \cdot J_f(\mathbf{c}) &= 
\mathbf{\overset{\leftharpoonup}{y}} \cdot P_Y \cdot \Phi'_{\mu,\mathbf{c}}\ \cdots \ \Phi'_{1,\mathbf{c}} \cdot P_X \\[1ex]
\Leftrightarrow \ \ \ \ J_f(\mathbf{c})^T \cdot \mathbf{\overset{\leftharpoonup}{y}}^T &= P_X^T \cdot \Phi'^T_{1,\mathbf{c}} 
\ \cdots \ \Phi'^T_{\mu,\mathbf{c}} \cdot P_Y^T \cdot \mathbf{\overset{\leftharpoonup}{y}}^T,
\end{align}
where again $\Phi'_{i,\mathbf{c}}$ denotes the Jacobian of $\Phi_i$ at $\left(\Phi_{i-1}\circ \cdots \circ \Phi_1 \circ P_X \right)(\mathbf{c})$. 

Obviously, the sequence of state vectors $\mathbf{\mathbf{v}}^{[i]} \in H=\R^{n+\mu}$ is the same as in the Forward Mode case. (Here, $\mu$ is, of course, again the total number of elementary functions which make up the function $f$.) The difference lies in the computation of the evaluation trace of \eqref{matrix-vector product Reverse Griewank}, which we denote by
$\mathbf{\mathbf{\overline{v}}}^{[\mu]} = \mathbf{\mathbf{\overline{v}}}^{[\mu]}(\mathbf{c},\mathbf{\overset{\leftharpoonup}{y}}),..., \mathbf{\mathbf{\overline{v}}}^{[0]}=\mathbf{\mathbf{\overline{v}}}^{[0]}(\mathbf{c},\mathbf{\overset{\leftharpoonup}{y}})$.

For simplicity, assume $P_Y(v_1,...,v_{n+\mu}) = (v_{n+\mu-m},...,v_{n+\mu})$, for all\\ $(v_1,...,v_{n+\mu}) \in H$ and denote 
$\mathbf{\overset{\leftharpoonup}{y}}^T = (y_1',...,y_m')$. We define the evaluation trace of \eqref{matrix-vector product Reverse Griewank} as 
\[\mathbf{\mathbf{\overline{v}}}^{[\mu]}:= P_Y^T \cdot \mathbf{\overset{\leftharpoonup}{y}}^T = (0,...,0,y_1',...,y_m') \ \ \textrm{and} \ \ \mathbf{\mathbf{\overline{v}}}^{[i-1]}:= \Phi'^T_{i,\mathbf{c}} \cdot \mathbf{\mathbf{\overline{v}}}^{[i]}.\]
By \eqref{Phi'},
\begin{align*} &\ \ \ \ \ \ \ \ \ \ \ \ \ \ \ \ \ \ \ \ \ \ \ \ \ \ (n+i)\textrm{-th column}\\
&\ \ \ \ \ \ \ \ \ \ \ \ \ \ \ \ \ \ \ \ \ \ \ \ \ \ \ \ \ \ \ \ \ \ \downarrow\\[1ex]
&\Phi'^T_{i, \mathbf{c}} = \left(\begin{array}{ccccccc} 
1 & \cdots & 0 & \frac{\partial \varphi_i}{\partial  v_{1}}
(\cdots )  & 0 & \cdots & 0 \\													
\vdots & \ddots & \vdots & \vdots & \vdots & \ & \vdots \\
0 & \cdots & 1 & \vdots & 0 & \cdots & 0 \\
0 & \cdots & 0 & \vdots & 1 & \cdots & 0 \\
\vdots & \ & \vdots & \vdots & \vdots & \ddots & \vdots \\
0 & \cdots & 0 & \ \ \ \frac{\partial \varphi_i}{\partial  v_{n+\mu}}
(\cdots )  & 0 & \cdots & 1 
\end{array}\right),	
\end{align*}
where $\frac{\partial \varphi_i}{\partial  v_{k}}
(\cdots ) = \frac{\partial \varphi_i}{\partial  v_k}(v_{i_1}(\mathbf{c}),...,v_{i_{n_i}}(\mathbf{c}) )$ is interpreted as $0$ if $\varphi_i$ does not depend on $v_k$, and the $v_{i_1}(\mathbf{c}),...,v_{i_{n_i}}(\mathbf{c}) \in \{\mathbf{\mathbf{v}}^{[i-1]}_1(\mathbf{c}),...,\mathbf{\mathbf{v}}^{[i-1]}_{n+i-1}(\mathbf{c})\}$.

Therefore, each $\mathbf{\mathbf{\overline{v}}}^{[i-1]}$ is of the form
\begin{align}\notag \label{v'[i] in reverse} \mathbf{\mathbf{\overline{v}}}^{[i-1]} &=\left(\begin{array}{c} \mathbf{\overline{v}}^{[i]}_{n+i} \cdot \frac{\partial \varphi_i}{\partial  v_1}(\cdots ) + \mathbf{\overline{v}}^{[i]}_{1} \\ \vdots \\  \mathbf{\overline{v}}^{[i]}_{n+i} \cdot \frac{\partial \varphi_i}{\partial  v_{n+i-1}}(\cdots ) + \mathbf{\overline{v}}^{[i]}_{n+i-1} \\[2ex]\mathbf{\overline{v}}^{[i]}_{n+i} \cdot \frac{\partial \varphi_i}{\partial  v_{n+i}}(\cdots ) \\[2ex]  \mathbf{\overline{v}}^{[i]}_{n+i} \cdot \frac{\partial \varphi_i}{\partial  v_{n+i+1}}(\cdots ) + \mathbf{\overline{v}}^{[i]}_{n+i+1}\\ \vdots \\ \mathbf{\overline{v}}^{[i]}_{n+i} \cdot  \frac{\partial \varphi_i}{\partial  v_{n+\mu}}(\cdots ) + \mathbf{\overline{v}}^{[i]}_{n+\mu}  \end{array} \right)\\[1.5ex]
&=  \mathbf{\overline{v}}^{[i]}_{n+i}\  \cdot \ \left(\begin{array}{c} \frac{\partial \varphi_i}{\partial  v_1}(\cdots )  \\ \vdots \\  \frac{\partial \varphi_i}{\partial  v_{n+i-1}}(\cdots ) \\[2ex]\frac{\partial \varphi_i}{\partial  v_{n+i}}(\cdots ) \\[2ex]  \frac{\partial \varphi_i}{\partial  v_{n+i+1}}(\cdots ) \\ \vdots \\ \frac{\partial \varphi_i}{\partial  v_{n+\mu}}(\cdots )  \end{array} \right)
+  
\left(\begin{array}{c} \mathbf{\overline{v}}^{[i]}_{1} \\ \vdots \\  \mathbf{\overline{v}}^{[i]}_{n+i-1} \\ 0 \\ \mathbf{\overline{v}}^{[i]}_{n+i+1}\\ \vdots \\ \mathbf{\overline{v}}^{[i]}_{n+\mu}  \end{array} \right)
\end{align}

The value $\mathbf{\overset{\leftharpoonup}{y}} \cdot J_f(\mathbf{c})$ is then given by
\[\left(\mathbf{\overset{\leftharpoonup}{y}}  \cdot J_f(\mathbf{c})\right)^T = P^T_X \cdot \mathbf{\overline{v}}^{[0]}.\]

If we let $\overline{\varphi}_i: H \to \R$ be an extension of $\varphi_i: U_i \to \R$ to $H$ with $\frac{\partial \overline{\varphi}_i}{\partial v_k} = 0$ if 
$\varphi_i$ does not depend on $v_k$, and define $\mathbf{\mathbf{\overline{v}}}^{[i,*]} \in H$ by 
$\mathbf{\mathbf{\overline{v}}}^{[i,*]}_k = \mathbf{\mathbf{\overline{v}}}^{[i]}_k$, for $k\neq n+i$, and $\mathbf{\mathbf{\overline{v}}}^{[i,*]}_{n+i} =0$, then we can rewrite \eqref{v'[i] in reverse} as
\[ \mathbf{\mathbf{\overline{v}}}^{[i-1]}=\left(\mathbf{\overline{v}}^{[i]}_{n+i} \cdot \nabla \overline{\varphi}_i(v_1,...,v_{n+\mu})\right)^T + \mathbf{\mathbf{\overline{v}}}^{[i,*]}.\] 
The expression on the right is the analogue of the term 
\[\nabla  \varphi_i(v_{i_1},...,v_{i_{n_i}} ) \cdot \left( \begin{array}{c} v'_{i_1} \\ \vdots \\ v'_{i_{n_i}} \end{array} \right), \] 
which appears in the process of Forward AD, where the main difference is the appearance of the added vector $\mathbf{\mathbf{\overline{v}}}^{[i,*]}$.

Note that, in contrast to Forward AD, the sequence of evaluation trace pairs $[\mathbf{\mathbf{v}}^{[i]},\mathbf{\overline{v}}^{[i]}]$ appears in reverse order (that is, $[\mathbf{\mathbf{v}}^{[\mu]},\mathbf{\overline{v}}^{[\mu]}],...,[\mathbf{\mathbf{v}}^{[1]},\mathbf{\overline{v}}^{[1]}]$). In particular, 
unlike to Forward AD, it is not efficient to overwrite the previous pair in each computational step. Indeed, since the state vector $\mathbf{v}^{[i]}$ is needed to compute $\mathbf{\overline{v}}^{[i]}$, 
the pairs $[\mathbf{\mathbf{v}}^{[i]},\mathbf{\overline{v}}^{[i]}]$ are not computed (as pairs) at all. Instead, one first evaluates the evaluation trace $\mathbf{\mathbf{v}}^{[1]},...,\mathbf{\mathbf{v}}^{[\mu]}$, stores these values, and then uses them to compute the $\mathbf{\overline{v}}^{[\mu]},...,\mathbf{\overline{v}}^{[1]}$ afterwards.

\begin{example}\label{example reverse}
Consider the function 
\[f: \R \to \R^2, \ \ \ \textrm{with} \ \ f(x)= \left( \begin{array}{c} x \\ \exp(x)*\sin(x) \end{array} \right).\]
We want to determine $(y'_1 \ \ y'_2) \cdot J_f(c)$ for fixed $\mathbf{\overset{\leftharpoonup}{y}} = (y'_1 \ \ y'_2) \in \R^{1 \times 2}$ and $\mathbf{c}= c \in \R$.

Set $H = \R^5$ and $f = P_Y \circ \Phi_4 \circ \Phi_3 \circ \Phi_2 \circ \Phi_1 \circ P_X $ with 
\begin{align*}
&P_X: \R \to \R^5, \ \ \textrm{with} \ \ P_X(x) = (x, 0, 0, 0, 0) ,\\[1ex]
&\Phi_1: \R^5 \to \R^5, \ \ \textrm{with} \ \  \Phi_1\left(v_1,v_2,v_3,v_4,v_5\right) 
= (v_1,\exp(v_1),v_3,v_4,), \\ 
&\Phi_2: \R^5 \to \R^5, \ \ \textrm{with} \ \  \Phi_2\left(v_1,v_2,v_3,v_4,v_5\right)
= (v_1, v_2, \sin(v_1),v_4,v_5 ),\\[1ex]
&\Phi_3: \R^5 \to \R^5, \ \ \textrm{with} \ \  \Phi_3\left(v_1,v_2,v_3,v_4,v_5\right) = (v_1, v_2, v_3, v_1, v_5),\\[1ex]
&\Phi_4: \R^5 \to \R^5, \ \ \textrm{with} \ \  \Phi_4\left(v_1,v_2,v_3,v_4,v_5\right)
= (v_1, v_2, v_3, v_4, v_2 * v_3),\\[1ex]
&P_Y: \R^5 \to \R, \ \ \textrm{with} \ \ P_Y(v_1,v_2,v_3,v_4,v_5) = (v_4,v_5).
\end{align*}
Clearly, we obtain the evaluation trace $\mathbf{v}^{[0]}(c),...,\mathbf{v}^{[4]}(c)$ with
\[ \mathbf{v}^{[0]}(c)=\left( \begin{array}{c} c \\ 0 \\ 0 \\ 0 \\ 0 \end{array} \right), ..., 
 \mathbf{v}^{[4]}(c)=\left( \begin{array}{c} c \\ \exp(c) \\ \sin(c) \\ c \\ \exp(c)*\sin(c) \end{array} \right).\]
The Reverse Mode of Automatic Differentiation produces now the vectors\\ $\mathbf{\overline{v}}^{[4]},...,\mathbf{\overline{v}}^{[0]}$ 
with\\[1ex]
$\mathbf{\overline{v}}^{[4]}=\left( \begin{array}{c} 0 \\ 0 \\ 0 \\ y'_1 \\ y'_2 \end{array} \right),
\mathbf{\overline{v}}^{[3]}=\left( \begin{array}{c} 0 \\ y'_2\cdot \sin(c) \\ y'_2\cdot \exp(c) \\ y'_1 \\ 0 \end{array} \right),
\mathbf{\overline{v}}^{[2]}=\left( \begin{array}{c} y'_1 \\ y'_2\sin(c) \\ y'_2\exp(c) \\ 0 \\ 0 \end{array} \right)$,\\[2ex]
$\mathbf{\overline{v}}^{[1]}=\left( \begin{array}{c} y'_2\exp(c) \cdot \cos(c) + y'_1\\ y'_2\sin(c) \\ 0 \\ 0 \\ 0 \end{array} \right)$ and finally\\[2ex]
$\mathbf{\overline{v}}^{[0]}=\left( \begin{array}{c} y'_2\sin(c) \cdot \exp(c) +\big( y'_2\exp(c)\cos(c)+y'_1\big) \\ 0 \\ 0 \\ 0 \\ 0 \end{array} \right)$\\[1ex]
Then 
\[(y'_1 \ \ y'_2) \cdot J_f(c)= P_X^T \cdot \mathbf{\overline{v}}^{0} = y'_1 + y'_2\exp(c)(\sin(c) + \cos(c)).\]
\end{example}
\begin{figure}
\centering
\includegraphics[]{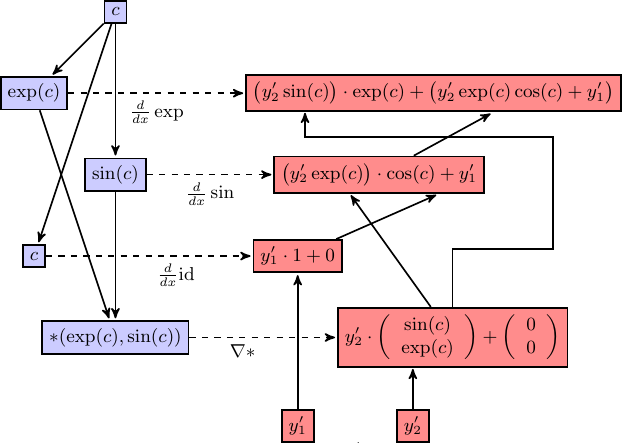}
\caption{Computational graph for Example \ref{example reverse} with elements of $ \mathbf{v}^{[4]}$ in blue and the evaluation of the directional derivative in red.}\label{graph for reverse}
\end{figure}

\begin{figure}
\includegraphics[scale=0.8]{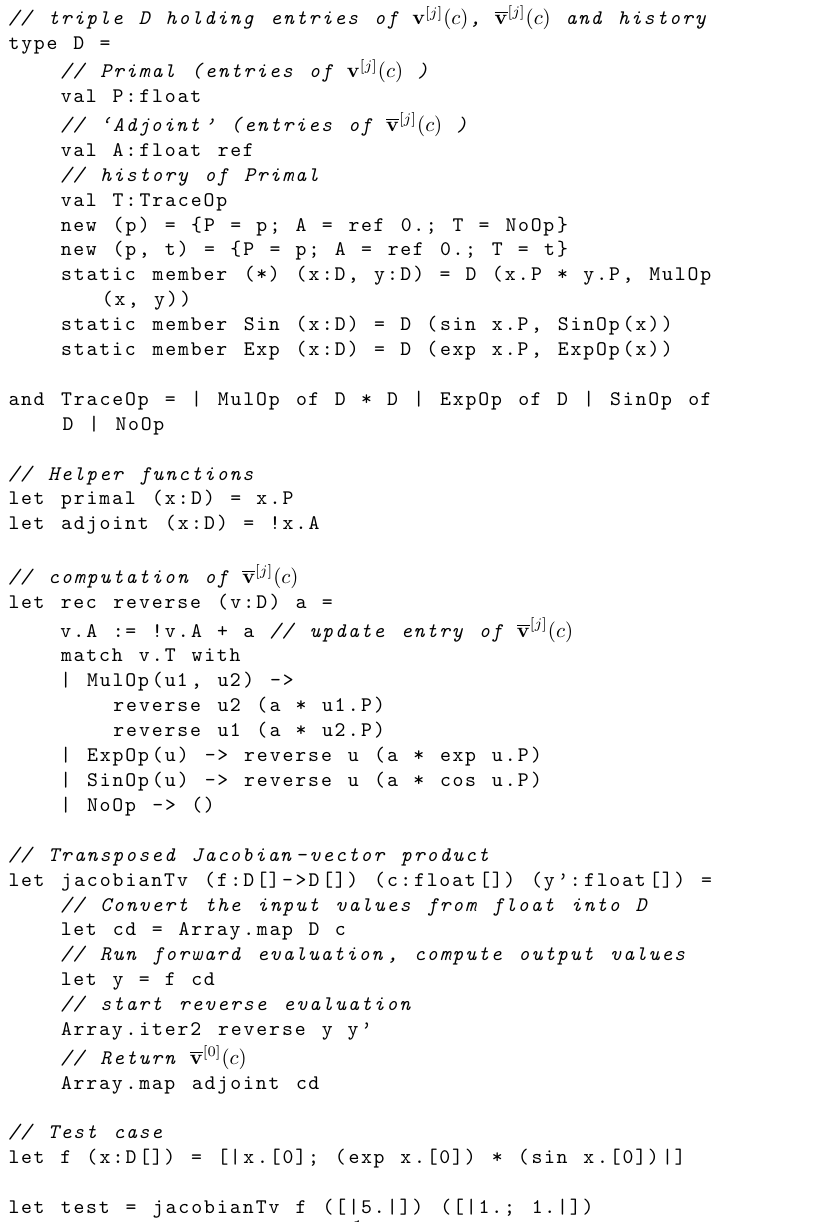}
\caption{Minimal (not optimal!) F\# example, similar to work in the library DiffSharp \cite{Gunes}, \cite{Gunes2}, showing the implementation of the Reverse Mode for Example \ref{example reverse} with the test case $c=5$, $\mathbf{\overset{\leftharpoonup}{y}} = (1 \ 1)$. (An optimal version would account for fan-out at each node.)}\label{Figure Fsharp Reverse}
\end{figure}

We summarize:
\begin{theorem}
By the above, given $\mathbf{c} \in X \subset \R^n$ and $\mathbf{\overset{\leftharpoonup}{y}} \in \R^{1 \times m}$, the evaluation of 
$\mathbf{\overset{\leftharpoonup}{y}}  \cdot J_f(\mathbf{c})$ for an automatically differentiable function $f: X \to \R^m$
can be achieved by computing the vectors $\mathbf{\mathbf{v}}^{[0]},...,\mathbf{\mathbf{v}}^{[\mu]}$ and $\mathbf{\overline{v}}^{[\mu]},...,\mathbf{\overline{v}}^{[0]}$, where the computation of each $\mathbf{\overline{v}}^{[i-1]}$ is effectively the computation of the real numbers 
\begin{align*}
&\mathbf{\overline{v}}^{[i]}_{n+i} \cdot \frac{\partial \varphi_i}{\partial  v_k}(v_{i_1}(\mathbf{c}),...,v_{i_{n_i}}(\mathbf{c}) ) + 
 \mathbf{\overline{v}}^{[i]}_{k}, \ \ \ k \neq n+i,\\[1ex]
 \textrm{and} \ \ \ \ \ \ \ \ \ \ \ &\mathbf{\overline{v}}^{[i]}_{n+i} \cdot \frac{\partial \varphi_i}{\partial  v_{n+i}}(v_{i_1}(\mathbf{c}),...,v_{i_{n_i}}(\mathbf{c}) ).
\end{align*}
\end{theorem}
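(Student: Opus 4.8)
The plan is to assemble the ingredients already developed in this section: the proof is essentially the chain-rule identity \eqref{matrix-vector product Reverse Griewank} combined with the explicit shape of the transposed elementary Jacobians, so I would proceed by reassembly rather than by any new argument.

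First I would invoke the representation $f = P_Y \circ \Phi_\mu \circ \cdots \circ \Phi_1 \circ P_X$ from Section~\ref{section Griewank}, available because $f$ is automatically differentiable, and differentiate it by the chain rule; transposing gives exactly \eqref{matrix-vector product Reverse Griewank}. Defining the backward evaluation trace by $\mathbf{\overline{v}}^{[\mu]} := P_Y^T \cdot \mathbf{\overset{\leftharpoonup}{y}}^T$ and $\mathbf{\overline{v}}^{[i-1]} := \Phi'^T_{i,\mathbf{c}} \cdot \mathbf{\overline{v}}^{[i]}$, associativity of matrix multiplication yields $\bigl(\mathbf{\overset{\leftharpoonup}{y}} \cdot J_f(\mathbf{c})\bigr)^T = P_X^T \cdot \mathbf{\overline{v}}^{[0]}$, which is the first claim. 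Since $\Phi'^T_{i,\mathbf{c}}$ is the transpose of the Jacobian of $\Phi_i$ evaluated at $\mathbf{v}^{[i-1]}(\mathbf{c})$, the $\mathbf{v}^{[i]}$ genuinely must be produced as well; one therefore first runs the forward trace $\mathbf{v}^{[0]}(\mathbf{c}),\ldots,\mathbf{v}^{[\mu]}(\mathbf{c})$ and stores it, then runs the reverse trace $\mathbf{\overline{v}}^{[\mu]},\ldots,\mathbf{\overline{v}}^{[0]}$, as recorded in Example~\ref{example reverse}.

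It remains to identify the arithmetic in one reverse step. Here I would substitute the explicit form of $\Phi'^T_{i,\mathbf{c}}$ (the transpose of the matrix \eqref{Phi'}): off the $(n+i)$-th column it is the identity, and that column carries the partials $\tfrac{\partial \varphi_i}{\partial v_k}(v_{i_1}(\mathbf{c}),\ldots,v_{i_{n_i}}(\mathbf{c}))$, read as $0$ whenever $\varphi_i$ does not depend on $v_k$. Carrying out the matrix--vector product and reading off coordinates reproduces \eqref{v'[i] in reverse}, namely the $k$-th entry of $\mathbf{\overline{v}}^{[i-1]}$ equals $\mathbf{\overline{v}}^{[i]}_{n+i} \cdot \tfrac{\partial \varphi_i}{\partial v_k}(\cdots) + \mathbf{\overline{v}}^{[i]}_k$ for $k \neq n+i$ and $\mathbf{\overline{v}}^{[i]}_{n+i} \cdot \tfrac{\partial \varphi_i}{\partial v_{n+i}}(\cdots)$ for $k = n+i$; the identity rows contribute no real arithmetic, so computing $\mathbf{\overline{v}}^{[i-1]}$ is effectively computing precisely the numbers listed in the statement. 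The only mildly delicate part is the bookkeeping --- ensuring that the arguments $v_{i_1},\ldots,v_{i_{n_i}}$ of $\varphi_i$ and the ``interpreted as $0$'' convention for its missing partials are used consistently across the forward and reverse passes --- but this consistency is already built into the construction of the $\Phi_i$ in Section~\ref{section Griewank}, so no genuinely new obstacle arises.
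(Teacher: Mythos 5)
Your proposal is correct and follows essentially the same route as the paper: the theorem is stated as a summary of the preceding derivation, namely the transposed chain-rule factorization \eqref{matrix-vector product Reverse Griewank}, the definition of the reverse trace $\mathbf{\overline{v}}^{[\mu]},\ldots,\mathbf{\overline{v}}^{[0]}$, and the coordinate-wise reading of $\Phi'^T_{i,\mathbf{c}}\cdot\mathbf{\overline{v}}^{[i]}$ as in \eqref{v'[i] in reverse}, which is exactly what you reassemble. Your added remarks on storing the forward trace and on the consistency of the ``interpreted as $0$'' convention match the paper's own discussion and introduce nothing divergent.
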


Comparing the complexity of Reverse AD with the one of symbolic differentiation for the examples given in Section \ref{section comparison FAD with symbolic} gives similar results as the comparison of Forward AD with symbolic differentiation. (See, for example, Figure \ref{second graph} for the case of a composition of $3$ uni-variate functions.)

\noindent
\textbf{Acknowledgements:}
I like to thank Felix Filozov for his advice and his help with the code in Figure \ref{Figure Haskell code Dual}, At{\i}l{\i}m 
G\"{u}ne\c{s} Baydin for his advice and his help with the code in Figure \ref{Figure Fsharp Reverse} and Barak Pearlmutter for general valuable advice. I am further very thankful to the anonymous referees for valuable criticism and suggestions which led to a significant improvement of this paper. 
\
\\

\noindent
\textbf{Funding:}
This work was supported by Science Foundation Ireland grant\\ 09/IN.1/I2637. 

\newpage


\begin{thebibliography}{99}
\bibitem{Berz} M. Berz, Differential algebraic description of beam dynamics to very high orders, \emph{Particle Accelerators} 24 (1989), 
109--124.
\bibitem{Bischof} C. H. Bischof. On the Automatic Differentiation of Computer Programs and an Application to Multibody Systems, 
\emph{IUTAM Symposium on Optimization of Mechanical Systems Solid Mechanics and its Applications} 43 (1996),\\ 41--48. 
\bibitem{Clifford1873} W. K. Clifford, Preliminary Sketch of Bi-quaternions, \emph{Proceedings of the London Mathematical Society} 4 (1873), 381--395.
\bibitem{Dixon} L. Dixon, Automatic Differentiation: Calculation of the Hessian, In: Encyclopedia of Optimization, second edition, Springer Science+Business Media, LLC., 2009, 133--137.
\bibitem{Garczynski} V. Garczynski, Remarks on differential algebraic approach to particle beam optics by M. Berz, \emph{Nuclear Instruments and Methods in Physics Research Section A}, 334 (2-3) (1993), 294--298.
\bibitem{GowerMello} R. M. Gower, M. P. Mello, A new framework for the computation of Hessians, \emph{Optimization Methods and Software}
27 (2) (2012), 251--273. 
\bibitem{Griewank1992ALG} A. Griewank, Achieving Logarithmic Growth of Temporal and Spatial Complexity in Reverse Automatic Differentiation, 
                        \emph{Optimization Methods and Software} 1 (1992), 35--54.
\bibitem{Griewank2003AMV} A. Griewank, A Mathematical View of Automatic Differentiation, \emph{Acta Numerica} 12 (2003), 321--398.
\bibitem{Griewank2008} A. Griewank and A. Walther, Evaluating Derivatives: Principles and Techniques of Algorithmic Differentiation, second edition, SIAM, Philadelphia, PA, 2008.
\bibitem{Gunes} A. G. Baydin, B. A. Pearlmutter, DiffSharp: Automatic Differentiation Library, version of 17th June 2015, \url{http://diffsharp.github.io/DiffSharp/}.
\bibitem{Gunes2} A. G. Baydin, B. A. Pearlmutter, A. A. Radul, J. M. Siskind, Automatic differentiation and machine learning: a survey. arXiv preprint. arXiv:1502.05767 (2015).
\bibitem{HuLu} J. H. Hubbard and B. E. Lundell, A First Look at Differential Algebra, \emph{American Mathematical Monthly} 118 (3) (2011), 245--261.
\bibitem{John1975} F. John, Partial Differential Equations, second edition, Springer-Verlag, New York-Heidelberg-Berlin, 1975.
\bibitem{kalman-2002a} D. Kalman, Double Recursive Multivariate Automatic Differentiation, \emph{Mathematics Magazine} 75 (3) (2002), 187--202.
\bibitem{KarczmarczukIII} J. Karczmarczuk, Functional Coding of Differential Forms, \emph{Proc First Scottish Workshop on Functional Programming, Stirling, Scotland, 1999}.  
\bibitem{KarczmarczukII} J. Karczmarczuk, Functional Differentiation of Computer Programs, \emph{Proc of the III ACM SIGPLAN International Conference on Functional Programming, Baltimore, MD, 1998}, 195--203.
\bibitem{KarczmarczukI} J. Karczmarczuk, Functional Differentiation of Computer Programs, \emph{Higher-Order and Symbolic Computation} 
14 (1) (2001), 35--57. 
\bibitem{Lang} S. Lang, Introduction to Differentiable Manifolds, second edition, Springer-Verlag, New York-Heidelberg-Berlin, 2002.
\bibitem{Manzyuk-mfps2012} O. Manzyuk, A Simply Typed $\lambda$-Calculus of Forward Automatic Differentiation, \emph{Electronic Notes in Theoretical Computer Science} 286 (2012), 257--272.
\bibitem{pearlmutter-siskind-popl-2007} B. A. Pearlmutter and J. M. Siskind, Lazy Multivariate Higher-Order Forward-Mode {AD}, 
\emph{Proc of the 2007 Symposium on Principles of Programming Languages, Nice, France, 2007}, 155--160.
\bibitem{PearlmutterSiskind2008a} B. A. Pearlmutter and J. M. Siskind, Reverse-Mode {AD} in a Functional Framework: Lambda the Ultimate Backpropagator, \emph{TOPLAS} 30 (2) (2008), 1--36.
\bibitem{Rall1983Dag} L. B. Rall, Differentiation and generation of Taylor coefficients in Pascal-SC, In: A New Approach to Scientific Computation, Academic Press, New York, 1983, 291--309.
\bibitem{Rall1986} L. B. Rall, The Arithmetic of Differentiation, \emph{Mathematics Magazine} 59, (1986), 275--282.
\bibitem{Ritt} J. Ritt, Differential Algebra, American Mathematical Society Colloquium Publications, vol. XXXIII, American Mathematical 
Society, New York, 1950.
\bibitem{Siskind2008NFM} J. M. Siskind and B. A. Pearlmutter, Nesting Forward-Mode {AD} in a Functional Framework, \emph{Higher-Order and Symbolic Computation} 21 (4) (2008),361--376.
\bibitem{Wengert1964ASA} R. Wengert, A Simple Automatic Derivative Evaluation Program, \emph{Communications of the ACM} 7 (8) (1964), 463--464.
\end{thebibliography}

\bigskip

\hrule
\begin{quote}
Philipp Hoffmann\\ 
National University of Ireland, Maynooth\\
Department of Computer Science\\
Maynooth, County Kildare\\
Ireland\\
\begin{tabular}{@{}l@{ }l}
\emph{email:} & \url{philipp.hoffmann@cs.nuim.ie} \\
 & \url{philip.hoffmann@maths.ucd.ie}
\end{tabular}
\end{quote}

\end{document}